\crefname{section}{Section}{Sections}
\crefname{subsection}{\S}{\S\S}
\crefname{subsubsection}{\S}{\S\S}
\theoremstyle{plain}
\newtheorem{lemma}{Lemma}[section]
\newtheorem{proposition}[lemma]{Proposition}
\newtheorem{corollary}[lemma]{Corollary}
\newtheorem{theorem}[lemma]{Theorem}
\theoremstyle{plain}
\newtheorem{theoremN}{Theorem}
\theoremstyle{plain}
\newtheorem{definition}[lemma]{Definition}
\newtheorem{example}[lemma]{Example}
\newtheorem{examples}[lemma]{Examples}
\newtheorem{remark}[lemma]{Remark}
\newtheorem{remarks}[lemma]{Remarks}
\crefname{definition}{definition}{definitions}
\crefname{ex}{example}{examples}
\crefname{exs}{example}{examples}
\crefname{remark}{remark}{remarks}
\crefname{remarks}{remark}{remarks}
\crefname{convention}{convention}{conventions}
\crefname{notation}{notation}{notations}
\crefname{table}{table}{tables}
\crefname{lemma}{lemma}{lemmas}
\crefname{proposition}{proposition}{propositions}
\crefname{corollary}{corollary}{corollaries}
\crefname{theorem}{theorem}{theorems}
\crefname{enumi}{}{}
\crefname{assumption}{assumption}{Assumptions}
\crefname{construction}{construction}{Constructions}
\crefname{equation}{}{}
\numberwithin{equation}{section}
\theoremstyle{nonumberplain}
\newtheorem{proof}{Proof}
\newcommand\pf[1]{\newtheorem{#1}{Proof of \Cref{#1}}}
\newcommand\bA{{\mathbb A}}
\newcommand\bB{{\mathbb B}}
\newcommand\bC{{\mathbb C}}
\newcommand\bD{{\mathbb D}}
\newcommand\bE{{\mathbb E}}
\newcommand\bF{{\mathbb F}}
\newcommand\bG{{\mathbb G}}
\newcommand\bH{{\mathbb H}}
\newcommand\bK{{\mathbb K}}
\newcommand\bQ{{\mathbb Q}}
\newcommand\bS{{\mathbb S}}
\newcommand\bT{{\mathbb T}}
\newcommand\bZ{{\mathbb Z}}
\newcommand\cC{{\mathcal C}}
\DeclareMathOperator{\id}{id}
\DeclareMathOperator{\End}{\mathrm{End}}
\DeclareMathOperator{\Ext}{\mathrm{Ext}}
\DeclareMathOperator{\Hom}{\mathrm{Hom}}
\DeclareMathOperator{\Aut}{\mathrm{Aut}}
\newcommand\spr[1]{\cite[\href{https://stacks.math.columbia.edu/tag/#1}{Tag {#1}}]{stacks-project}}
\newcommand{\qedhere}{\mbox{}\hfill\ensuremath{\blacksquare}}
\newcommand{\xrightarrowdbl}[2][]{%
  \xrightarrow[#1]{#2}\mathrel{\mkern-14mu}\rightarrow
}
\title{Extensible endomorphisms of compact groups}
\author{Alexandru Chirvasitu}
\begin{document}

\date{}

\newcommand{\Addresses}{{
  \bigskip
  \footnotesize

  \textsc{Department of Mathematics, University at Buffalo}
  \par\nopagebreak
  \textsc{Buffalo, NY 14260-2900, USA}  
  \par\nopagebreak
  \textit{E-mail address}: \texttt{achirvas@buffalo.edu}


}}

\maketitle

\begin{abstract}
  We show that the endomorphisms of a compact connected group that extend to endomorphisms of every compact overgroup are precisely the trivial one and the inner automorphisms; this is an analogue, for compact connected groups, of results due to Schupp and Pettet on discrete groups (plain or finite). A somewhat more surprising result is that if $\mathbb{A}$ is compact connected and abelian, its endomorphisms extensible along morphisms into compact connected groups also include $-\mathrm{id}$ (in addition to the obvious trivial endomorphism and the identity). Connectedness cannot be dropped on either side in this last statement. 
\end{abstract}

\noindent {\em Key words: extensible; overgroup; endomorphism; inner; simple; algebraically simple; category; compact group; Lie group; pro-torus; Dynkin diagram; exceptional Lie groups; cocycle; cohomology; linear representation; projective representation}

\vspace{.5cm}

\noindent{MSC 2020: 22C05; 22D35; 22D45; 20K30; 20J06; 20K27; 20C25; 20K40; 20K35; 20K45; 20G41}


\section*{Introduction}

The automorphisms of a group $\bG$ that extend to automorphisms of every larger group $\bH\ge \bG$ are (as one might have expected), precisely the inner ones: \cite[Theorem]{schupp_inner}. The analogous statements hold \cite[Corollary]{pettet_inner-fin} for other classes of groups (i.e. with $\bG$ and $\bH$ both restricted to the relevant class), such as finite, or finite solvable, or finite nilpotent, perhaps further constrained to orders divisible only by pre-specified sets of primes. The relevant notion is as follows: 

\begin{definition}\label{def:ext}
  A endomorphism $\varphi\in \End_{\cC}(c)$ of an object $c$ in a category $\cC$ {\it extends (or is extensible) along} a morphism $c\xrightarrow{\theta}c'$ if it fits into a commutative diagram
  \begin{equation*}
    \begin{tikzpicture}[auto,baseline=(current  bounding  box.center)]
      \path[anchor=base] 
      (0,0) node (l) {$c$}
      +(2,.5) node (u) {$c$}
      +(2,-.5) node (d) {$c'$}
      +(4,0) node (r) {$c'$}
      ;
      \draw[->] (l) to[bend left=6] node[pos=.5,auto] {$\scriptstyle \varphi$} (u);
      \draw[->] (u) to[bend left=6] node[pos=.5,auto] {$\scriptstyle \theta$} (r);
      \draw[->] (l) to[bend right=6] node[pos=.5,auto,swap] {$\scriptstyle \theta$} (d);
      \draw[->] (d) to[bend right=6] node[pos=.5,auto,swap] {$\scriptstyle \varphi'$} (r);
    \end{tikzpicture}
  \end{equation*}
  We also refer to a map $\varphi'$ as in the preceding diagram as an {\it extension of $\varphi$ along $\theta$}.
\end{definition}

An adjacent theme is taken up in \cite{bg}, which centers around extending endomorphisms {\it functorially} in the above diagram's $\theta$ (regarded as an object of the {\it comma category} \cite[Exercise 3K]{ahs} $c\downarrow\cC$). It is that paper that motivated and provided the scaffolding for \cite{chi_inner_xv1_toappear}, with \cite[Section 2]{chi_inner_xv1_toappear} focused on functorial extensions in the category of compact groups.

The two setting tend to not {\it quite} be comparable, though they frequently almost are: the functorial versions of the results (e.g. \cite[Corollary 3]{bg}) have stronger hypotheses {\it and} somewhat stronger conclusions than the non-functorial counterparts (such as the already-cited \cite[Theorem]{schupp_inner}). In the present note, retaining the compact-group setting, we revert back to ``one-off'' (non-functorial) extensibility in the sense of \Cref{def:ext}. The connected-compact-group version of \cite[Theorem]{schupp_inner} (or rather of a hypothetical analogue thereof concerning {\it endo}morphisms) reads (\Cref{th:conninner}):

\begin{theoremN}
  Let $\bG$ be a compact connected group and $\varphi$ an endomorphism thereof. The following conditions are equivalent. 
  \begin{enumerate}[(a)]
  \item $\varphi$ is either trivial or an inner automorphism. 
  \item $\varphi$ is extensible along every morphism to a compact group.
  \item $\varphi$ is extensible along every embedding into a compact group.
  \item $\varphi$ is extensible along any morphism to a compact Lie group.
  \item $\varphi$ is extensible along any morphism to a compact Lie group of the form $U(n)\rtimes\Gamma$ for finite $\Gamma$ acting on $U(n)$.  \qedhere
  \end{enumerate}
\end{theoremN}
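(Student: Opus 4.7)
The implications $(a)\Rightarrow(b)\Rightarrow(c),(d),(e)$ and $(d)\Rightarrow(e)$ are formal: an inner automorphism $\mathrm{conj}(g)$ extends along $\theta:\bG\to\bH$ via $\mathrm{conj}(\theta(g))$, the trivial endomorphism extends trivially, and (c), (d), (e) each merely restrict the class of test morphisms allowed in (b). The content lies in the converses $(c)\Rightarrow(a)$ and $(e)\Rightarrow(a)$; I would focus on $(e)\Rightarrow(a)$, since its class of test targets is the most restrictive, and treat (c) by an analogous but somewhat lighter argument.

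For $(e)\Rightarrow(a)$, the first step is a reduction to the case that $\bG$ is a compact connected Lie group. A compact connected group is a cofiltered inverse limit $\bG=\varprojlim\bG/\bN_i$ of compact connected Lie quotients, each of which embeds via Peter--Weyl into some $U(n_i)$. Applying extensibility to the composites $\bG\to\bG/\bN_i\hookrightarrow U(n_i)$ yields, for each $i$, a Lie-level endomorphism of $\bG/\bN_i$ compatible with $\varphi$; assuming the Lie case of the theorem, each such endomorphism is trivial or inner. A finiteness/coherence argument --- exploiting that $\mathrm{Out}$ of a compact connected Lie group is finite, so the ``trivial versus inner'' choice is a coherent finite datum along the tower --- then lifts the conclusion back to $\bG$.

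In the Lie case, I would pick a faithful representation $\rho:\bG\hookrightarrow U(n)$ and apply extensibility at $\Gamma=1$ to obtain $\psi:U(n)\to U(n)$ with $\psi\rho=\rho\varphi$. Endomorphisms of $U(n)$ are, modulo $\det$-power characters, generated by inner automorphisms and complex conjugation, so this already restricts $\varphi$ to a short list. The remaining non-inner, non-trivial candidates come in two flavors: (i) outer automorphisms of simple factors of $\bG$ acting via Dynkin diagram symmetries, and (ii) automorphisms like $-\mathrm{id}$ on the central pro-torus. The first are killed by test morphisms $\bG\to U(n)\rtimes\Gamma_0$ with $\Gamma_0$ the outer automorphism group of the simple factor at hand, so that the outer symmetry becomes inner in the overgroup and compatibility with a would-be $\psi$ forces the outer part of $\varphi$ to vanish.

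The main obstacle is (ii): as the abstract flags, $-\mathrm{id}$ on a compact connected abelian group is extensible along every morphism into a compact \emph{connected} group (via complex conjugation on any enveloping unitary group, suitably implemented), so ruling it out under (e) demands genuinely disconnected targets of the form $U(n)\rtimes\Gamma$ in which $\Gamma$ implements complex conjugation (or a closely related outer automorphism) on $U(n)$. The delicate part of the proof will be the explicit construction of such a $\rho:\bG\to U(n)\rtimes\Gamma$ and the cocycle-level compatibility computation showing that no extension $\tilde\psi$ can simultaneously realize $\tilde\psi\rho=\rho\varphi$ and respect the $\Gamma$-action on both sides. That cocycle analysis, carried out in sufficient generality to cover all the $\Gamma$-actions that arise, is where I expect the bulk of the technical effort --- and any real surprises --- to reside.
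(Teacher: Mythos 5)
Your formal implications are right, and you have correctly located the crux: one must rule out $-\id$ on the central pro-torus and the diagram automorphisms of simple factors, and this forces genuinely disconnected targets $U(n)\rtimes\Gamma$. But the proposal has a genuine gap precisely there. The entire weight of the argument rests on exhibiting semidirect products $U(n)\rtimes\Gamma$ to which no outer automorphism of $U(n)$ lifts, and you explicitly defer this (``the cocycle analysis\dots is where I expect the bulk of the technical effort to reside''); without that construction there is no proof. The paper supplies it in \Cref{le:whenlifts}, \Cref{cor:whenextun}, \Cref{ex:unnotext} and \Cref{le:urtimesgamma}: take $\Gamma$ a finite complete metabelian group (only inner automorphisms) acting on $U(n)$ through a projective representation whose Schur-multiplier class $\overline{z}$ has order $>2$; then $\overline{z}^{-1}$ is not in the $\Aut(\Gamma)$-orbit of $\overline{z}$, and the cocycle criterion shows complex conjugation cannot lift. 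Worse, the mechanism you do describe for killing outer automorphisms of simple factors is backwards: mapping $\bG$ into $U(n)\rtimes\Gamma_0$ with $\Gamma_0$ the outer automorphism group, ``so that the outer symmetry becomes inner in the overgroup,'' makes the outer automorphism \emph{extensible} (as conjugation by the adjoined element) rather than obstructed --- e.g.\ complex conjugation on $SU(3)$ extends along $SU(3)\hookrightarrow U(3)\rtimes\bZ/2$ with $\bZ/2$ acting by conjugation. What is needed is the opposite: a target in which the induced automorphism of $U(n)$ (necessarily outer, since it moves the class of $\rho$) fails to lift.

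Two further points. Your claim that $\psi\rho=\rho\varphi$ ``restricts $\varphi$ to a short list'' presumes the extension $\psi$ is an automorphism of $U(n)$; that requires first knowing $\varphi$ is trivial or an automorphism (the paper's \Cref{pr:extembinj}, which you never establish) and a careful choice of $\rho$ --- the paper treats $1$-dimensional $\rho'$ separately for exactly this reason. And your global architecture differs from the paper's: you reduce to the Lie case by an inverse-limit coherence argument, whereas the paper never does so. Instead, for an \emph{arbitrary} irreducible representation $\rho'$ of the compact connected group $\bG$ it builds a single test morphism $\bG\to U(n)\rtimes\Gamma$ forcing the extension to restrict to an \emph{inner} automorphism of $U(n)$, hence forcing $\varphi$ to fix the class of $\rho'$; innerness of $\varphi$ then follows from McMullen's theorem \cite[Corollary 2]{mcm-dual} that an automorphism of a compact connected group fixing every point of the dual object is inner. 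That route avoids both the Lie reduction and any classification of endomorphisms of $U(n)$; your reduction could probably be made to work, but it is extra machinery that still leaves the Lie case --- and hence the key construction --- unproved.
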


The careful listing of the various mutually equivalent conditions is not (always, or only) pedantry: they genuinely make a difference. Were $\bG$ above abelian for instance, the result would not hold as such for endomorphisms extensible only along maps to {\it connected} compact groups: per \Cref{th:ab-conn}, there is also the perhaps surprising additional option of $-\id$:

\begin{theoremN}
  The endomorphisms of a compact connected group extensible along all morphisms to (or embeddings into) compact connected (Lie) groups are precisely the trivial one and $\pm\id$.  \qedhere
\end{theoremN}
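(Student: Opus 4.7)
The plan splits into the two usual directions. For the forward (``easy'') direction, $\varphi=0$ and $\varphi=\id$ manifestly extend: send everything to $1$, or leave it alone. For $\varphi=-\id$ and a target morphism $\theta\colon\bA\to\bH$ with $\bH$ compact connected, I would exhibit an endomorphism of $\bH$ restricting to inversion on $\theta(\bA)$. The image $\theta(\bA)$ is compact, connected, and abelian, so it lies in some maximal pro-torus $T\subseteq\bH$; when $\bH$ is Lie, the classical opposition involution (the longest Weyl element composed with an appropriate diagram involution, realizing $-\id$ on the root lattice) is an automorphism of $\bH$ acting as $t\mapsto t^{-1}$ on $T$, and so restricts as required. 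For general compact connected $\bH$, I would produce Lie-quotient opposition involutions and assemble them into an endomorphism of the pro-Lie limit via a compactness argument on the pointwise-convergence topology of $\End(\bH)$.

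For the reverse (``hard'') direction, $SU(2)$ serves as the universal probe. For each character $\chi\in\hat\bA$, consider
\[
\theta_\chi\colon\bA\to SU(2),\qquad a\mapsto\mathrm{diag}(\chi(a),\chi(a)^{-1}).
\]
Given extensibility, let $\varphi'\in\End(SU(2))$ extend $\varphi$ along $\theta_\chi$. Since $SU(2)$ is simple, $\varphi'$ is either trivial or an automorphism; since all automorphisms of $SU(2)$ are inner, the restriction of a nontrivial such automorphism to the standard maximal torus $T$ lies in the Weyl group $\{\pm\id\}$. Pulling back along $\theta_\chi$ (and noting that a nontrivial $\chi$ is necessarily surjective onto $\bT$, so $\theta_\chi(\bA)=T$), these three cases read
\[
\chi\circ\varphi\in\{0,\chi,-\chi\}\text{ for every }\chi\in\hat\bA,
\]
i.e., the dual endomorphism $\hat\varphi$ of the torsion-free abelian group $\hat\bA$ satisfies $\hat\varphi(\chi)\in\{0,\pm\chi\}$ for every $\chi$.

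The remaining step is purely combinatorial: such a $\hat\varphi$ must itself be $0$, $\id$, or $-\id$. Partition $\hat\bA\setminus\{0\}$ as $A_+\sqcup A_-\sqcup K$ according to whether $\hat\varphi$ acts as $\id$, $-\id$, or $0$. Testing the trichotomy on sums $\chi_1+\chi_2$ with $\chi_i$ in different parts -- using torsion-freeness of $\hat\bA$ to preclude $2\chi=0$ collisions -- quickly shows that no two of $A_+,A_-,K$ can be simultaneously nonempty. Hence $\hat\varphi\in\{0,\id,-\id\}$, and dualizing back gives $\varphi\in\{0,\pm\id\}$.

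The principal obstacle I anticipate is the easy direction outside the Lie case: opposition involutions on the Lie quotients of $\bH$ are only canonical up to inner automorphism, so one must patch them carefully into a genuine endomorphism of the pro-Lie inverse limit $\bH$. The hard direction is, by contrast, largely mechanical once the $SU(2)$-probe and the torsion-free combinatorics are in place.
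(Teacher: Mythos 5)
The genuine gap is in the forward direction, for a target $\bH$ that is compact connected but not Lie --- exactly where you anticipated trouble --- and the fix you sketch does not work as stated. First, $\End(\bH)$ is \emph{not} compact in the topology of pointwise convergence: pointwise limits of continuous endomorphisms of a compact group are homomorphisms but need not be continuous (already the closure of $\End(\bS^1)=\bZ$ inside all self-maps of $\bS^1$ contains discontinuous characters), so there is no cluster point to extract. Second, and more fundamentally, a limiting argument would need the inverting automorphisms chosen on the Lie quotients $\bH_i$ to be coherent with the bonding maps, and an automorphism of $\bH_j$ has no reason to preserve $\ker(\bH_j\twoheadrightarrow\bH_i)$, so there is no inverse system of choices to pass to the limit of. The paper's \Cref{le:aut-1} sidesteps the patching problem entirely via the structure theorem: $\bH$ is a quotient of $\bT\times\prod_i\bS_i$ ($\bT$ a pro-torus, $\bS_i$ simple simply connected Lie) by a totally disconnected \emph{central} subgroup, so one may invert $\bT$, choose on each $\bS_i$ \emph{independently} an automorphism inverting a maximal torus containing the relevant projection of $\bA$ (your opposition involution; the paper cites \cite[Proposition 14.3]{hmph_liealg}), and note that the resulting automorphism of the product inverts the entire center, hence preserves the central kernel and descends. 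I would replace the inverse-limit patching by this product construction; your Lie case (with a sentence on the central torus of a non-semisimple $\bH$, which the semisimple opposition involution does not see) is then subsumed.

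Your reverse direction is correct and takes a genuinely different, more economical route than the paper's. The paper runs two separate reductions --- morphisms into algebraically simple connected Lie groups force $\varphi$ to be trivial or an automorphism (\Cref{pr:extembinj}), and morphisms into finite-dimensional tori force $\varphi$ to be an integer scaling (\Cref{le:torendo}, via $\End(\bZ^d)$ and invariance of all subgroups of $\widehat{\bA}$) --- and intersects the conclusions. Your single $SU(2)$-probe $\theta_\chi$ does both at once: a nontrivial endomorphism of $SU(2)$ is an inner automorphism, it must normalize $T=\theta_\chi(\bA)$ for $\chi\neq 0$, hence acts on $T$ through the Weyl group $\{\pm 1\}$, giving $\widehat{\varphi}(\chi)\in\{0,\pm\chi\}$; the torsion-freeness combinatorics you describe then closes the argument cleanly. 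One caveat: this handles the ``morphisms to'' variant only; since $\theta_\chi$ is essentially never injective, the ``embeddings into'' variant still needs the reduction of \Cref{pr:extembinj} from embeddings to arbitrary morphisms into Lie groups.
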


Note incidentally that connectedness on {\it both} sides of the morphisms is essential in a result such as the one just stated: the discussions in \hyperref[exs:adisc]{Examples~\ref*{exs:adisc}} and \Cref{ex:unnotext} each drop one type of connectedness.

As for morphisms whose domain and codomain are {\it both} abelian, the surprises (if any) are not major, some care is needed when handling the profinite case. Compressing \Cref{th:abendo} for brevity:

\begin{theoremN}
  \begin{enumerate}[(1)]
  \item The endomorphisms extensible in the category of compact abelian groups are precisely the scaling operators by integers.

  \item For non-profinite compact abelian groups these are also the morphisms extensible along maps into finite-dimensional tori.

  \item While for profinite groups the latter are the scaling operators by profinite integers, i.e. elements of the profinite completion $\widehat{\bZ}$ of $(\bZ,+)$.  \qedhere
  \end{enumerate}
\end{theoremN}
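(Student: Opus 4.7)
The plan is to reduce to Pontryagin duals, which translates extensibility into a ``lifting'' problem. Set $T:=\widehat{\bA}$ and $\psi:=\hat\varphi\in\End(T)$; extensibility of $\varphi$ along $\theta:\bA\to\bB$ is equivalent to the existence of some $\psi'\in\End(\widehat{\bB})$ with $\hat\theta\psi'=\psi\hat\theta$. The crucial tests are morphisms $\theta:\bA\to\bT$, which dualize to picking $a:=\hat\theta(1)\in T$ and reduce the lifting condition to $\psi(a)\in\bZ\cdot a$. So torus-extensibility of $\varphi$ is equivalent to $\psi$ preserving every cyclic subgroup of $T$.

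For (3) ($T$ torsion), decompose $T=\bigoplus_p T_p$: on each $T_p$ the cyclic-preserving condition forces $\psi|_{T_p}$ to be multiplication by a single $\alpha_p\in\bZ_p$, obtained by comparing the scalars on $\langle a\rangle$, $\langle b\rangle$, and $\langle a+b\rangle$ for independent cyclic subgroups of equal order and then passing to the inverse limit over $T_p[p^n]$. Reassembling gives $\psi=\alpha\cdot\id$ with $\alpha=(\alpha_p)\in\widehat{\bZ}$. For (2) ($T$ non-torsion), pick $b\in T$ of infinite order so $\psi(b)=kb$ for a unique $k\in\bZ$; for any $a\in T$, matching $\psi(a+b)=\psi(a)+kb$ with $\psi(a+b)=\ell(a+b)$ forces $\psi(a)=ka$, using either $\langle a\rangle\cap\langle b\rangle=0$ when $a$ is torsion or, when $a$ has infinite order, a $\bZ$-independence/dependence dichotomy (a relation $ma=nb$ when pushed through $\psi$ yields $k_a=k$ since $ma$ is of infinite order). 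Hence $\psi=k\cdot\id$.

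For (1): integer scalings extend functorially to any overgroup, so one direction is routine. The non-profinite converse follows from (2). The substance lies in the profinite case: by (3), $\psi=\alpha\cdot\id$ for some $\alpha\in\widehat{\bZ}$, and I must promote $\alpha$ to $\bZ$. If $T$ has bounded exponent $N$, then $\alpha$ only matters modulo $N$, so any integer representative witnesses $\psi$ as an integer scaling; the only case with real content is $T$ of unbounded exponent. For this I construct a discrete surjection $G\twoheadrightarrow T$ (dualizing to an embedding $\bA\hookrightarrow\widehat{G}$) whose endomorphism structure enforces integrality: embed $T$ into its divisible hull $D\cong\bigoplus_p\bZ(p^\infty)^{r_p}$, and thence into $(\bQ/\bZ)^I$ for a suitable $I$, cover by the projection $\bQ^I\twoheadrightarrow(\bQ/\bZ)^I$, and take $G:=T\times_{(\bQ/\bZ)^I}\bQ^I$. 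Then $G$ is a torsion-free subgroup of $\bQ^I$, and any lift of $\alpha\cdot\id$ to $\End(G)$ extends to a $\bQ$-linear map on $\bQ^I$ stabilizing $G$; a denominator-clearing argument modeled on the embedding $\bZ_p\hookrightarrow S_p$ of $p$-adic integers into the $p$-adic solenoid $S_p$ then forces $\alpha\in\bZ$.

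The main technical obstacle is this last step for groups $T$ with non-divisible primary parts---for instance, $T=\bigoplus_p\bZ/p$, so that $\bA=\prod_p\bZ/p$. There $G$ is a nontrivial subgroup of $\bQ^I$ (in that example, rationals of squarefree denominator), and one must check carefully that its endomorphism ring admits no non-integer scalars, thereby pinning $\alpha$ to a single rational integer common to all primary components.
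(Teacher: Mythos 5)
Your dualization, and your treatments of parts (2) and (3), track the paper's argument closely: the paper routes (2) through the quotients $\bT^d\times\bF$ and an order-divisibility trick rather than through a single infinite-order element, and gets the profinite integer in (3) as a cluster point of a net in $\widehat{\bZ}$ rather than by an explicit inverse limit over $T_p[p^n]$, but the content is essentially the same and those parts are sound (you do still owe the easy observation that a $\widehat{\bZ}$-scaling of a profinite group extends along maps to tori because such maps have finite image). The problem is part (1) in the profinite, unbounded-exponent case, which you correctly identify as carrying all the weight and which your construction does not settle; as described, it in fact fails.

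The test group $G:=T\times_{(\bQ/\bZ)^I}\bQ^I$ sits in an extension $0\to\bZ^{(I)}\to G\to T\to 0$, and an endomorphism of $G$ inducing $\alpha\cdot\id$ on $T$ is merely required to preserve that kernel --- nothing forces it to act on $\bZ^{(I)}$ as a scalar, so the step ``extends to a $\bQ$-linear map on $\bQ^I$ stabilizing $G$, then clear denominators'' does not pin down a single rational. Concretely, take $T=\bigoplus_p\bZ/p$ embedded in $\bigoplus_I\bQ/\bZ$ with one coordinate per prime, so that $G\cong\bigoplus_p\tfrac1p\bZ$. For \emph{any} $\alpha\in\widehat{\bZ}$, the map sending the generator $1/p$ of the $p$-th summand to $n_p/p$, where $n_p\in\bZ$ is any integer representative of $\alpha\bmod p$, is an endomorphism of $G$ lifting $\alpha\cdot\id$ on $T$; this $G$ therefore detects nothing. (Your squarefree-denominator example is the $|I|=1$ case, where the construction does work; but as soon as the divisible hull needs more than one coordinate --- e.g.\ if some $p$-rank exceeds $1$ --- the kernel becomes $\bZ^{(I)}$ with $|I|>1$ and the obstruction evaporates.) The missing idea is that all the cyclic pieces of $T$ must be tied to a \emph{single} copy of $\bZ$: the paper first reduces, via \Cref{pr:red2dirsums} and the preceding parts, to a direct sum of cyclics $\bD=\bigoplus_q\bZ/q$, and then takes the extension $0\to\bZ\to\bE\to\bD\to 0$ whose class under $\Ext(\bD,\bZ)\cong\Hom(\bD,\bQ/\bZ)$ embeds every summand into $\bQ/\bZ$. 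There the kernel is one copy of $\bZ$, with $\End(\bZ)=\bZ$, so a lift acts on it by a single integer $k$, and compatibility on each summand forces $\alpha\equiv k$ modulo every prime power $q$ occurring, i.e.\ $\alpha=k$. You need this (or an equivalent single-kernel device) to close part (1).
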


\subsection*{Acknowledgements}

I am grateful for helpful comments from R. Kanda, I. Penkov, D. Sage, A. Sikora and S. P. Smith.

This work is partially supported by NSF grant DMS-2001128.

\section{Extending compact-group endomorphisms}\label{se:cpct}

For (connected) Lie groups, simplicity is understood as that of the Lie algebra \cite[following Proposition 6.5]{hm4}: the non-trivial normal subgroups are discrete but possibly non-trivial. We refer to the stronger property that the group have no proper non-trivial normal subgroups as {\it algebraic} simplicity. For compact groups, this is also equivalent \cite[Theorem 9.90]{hm4} to asking that {\it closed} proper normal subgroups be trivial. 

One of the main results, announced above in the Introduction, is

\begin{theorem}\label{th:conninner}
  For a compact connected group $\bG$ the following conditions on $\varphi\in \End(\bG)$ are equivalent.
  \begin{enumerate}[(a)]
  \item\label{item:th:conninner-a} $\varphi$ is either trivial or an inner automorphism. 
  \item\label{item:th:conninner-b} $\varphi$ is extensible along every morphism to a compact group.
  \item\label{item:th:conninner-c} $\varphi$ is extensible along every embedding into a compact group.
  \item\label{item:th:conninner-d} $\varphi$ is extensible along any morphism to a compact Lie group.
  \item\label{item:th:conninner-e} $\varphi$ is extensible along any morphism to a compact Lie group of the form $U(n)\rtimes\Gamma$ for finite $\Gamma$ acting on $U(n)$. 
  \end{enumerate}
\end{theorem}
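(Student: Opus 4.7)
The implication (a) $\Rightarrow$ (b) is formal: the trivial endomorphism extends to the trivial endomorphism, and an inner automorphism $\mathrm{Ad}_g$ of $\bG$ extends along $\theta:\bG\to\bH$ as $\mathrm{Ad}_{\theta(g)}$. Likewise (b) $\Rightarrow$ (c), (b) $\Rightarrow$ (d) and (d) $\Rightarrow$ (e) are automatic, each just restricting the class of admissible test morphisms. The substantive content is therefore the two implications into (a), namely (e) $\Rightarrow$ (a) and (c) $\Rightarrow$ (a); since (c) and (e) constrain overlapping-but-distinct classes of test maps, these two reductions must be handled separately, though in parallel.

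For (e) $\Rightarrow$ (a): Peter--Weyl reduces matters to probing $\varphi$ against continuous irreducible unitary representations $\pi:\bG\to U(n)$. Viewing $\pi$ as a map to $U(n)\rtimes\{1\}$, extensibility yields an endomorphism $\varphi'_\pi$ of $U(n)$ satisfying $\varphi'_\pi\circ\pi=\pi\circ\varphi$. The classification of continuous endomorphisms of $U(n)$ (trivial, inner, or inner composed with complex conjugation) then forces $\pi\circ\varphi$ to be either trivial, inner-equivalent to $\pi$, or inner-equivalent to the dual $\bar\pi$. The outer-twist alternative is eliminated by exploiting the semidirect targets: with $\Gamma$ implementing an outer automorphism of $U(n)$ (e.g.\ complex conjugation via $\bZ/2$, or a permutation of copies when $\pi$ is assembled from several irreps), one engineers a test morphism $\bG\to U(n)\rtimes\Gamma$ that admits no extension of $\varphi$ compatible with the $\Gamma$-grading if $\pi\circ\varphi$ were only outer-equivalent to $\pi$.

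The main obstacle is the \emph{coherence} step. Once, irrep by irrep, $\pi\circ\varphi$ is known to be either trivial or of the form $\mathrm{Ad}_{\pi(g_\pi)}\circ\pi$, a single $g\in\bG$ must be extracted with $\varphi=\mathrm{Ad}_g$ globally (or $\varphi$ must be shown to be trivial). I would argue as follows: for a finite collection $\cF$ of irreps on which $\varphi$ is nontrivial, apply extensibility to the direct sum $\pi_\cF:=\bigoplus_{\pi\in\cF}\pi:\bG\to U(N)$, obtaining a conjugator $A_\cF\in U(N)$ that handles all members of $\cF$ simultaneously; a further $U(n)\rtimes\Gamma$-style test (with $\Gamma$ permuting or dualizing summands) confines $A_\cF$ to $\pi_\cF(\bG)$ modulo its center, producing an element $g_\cF\in\bG$ implementing $\varphi$ on the joint image. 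The sets
\[
C_\cF\ :=\ \{g\in\bG:\pi_\cF\circ\varphi=\mathrm{Ad}_{\pi_\cF(g)}\circ\pi_\cF\}
\]
are then nonempty closed cosets of the kernel $\ker\pi_\cF$ (modulo center), nested as $\cF$ grows; compactness of $\bG$ gives a common $g\in\bigcap_\cF C_\cF$, and Peter--Weyl faithfulness promotes it to a genuine implementer of $\varphi$. Connectedness of $\bG$ enters here to rule out disconnected ambiguity between the ``globally trivial'' and the ``globally inner'' branches, preventing $\varphi$ from behaving inner on some irreps and trivial on others without resolving consistently.

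Finally, (c) $\Rightarrow$ (a) runs on the same engine with the test morphisms upgraded to embeddings. Each $\theta:\bG\to U(n)\rtimes\Gamma$ employed above is replaced by the embedding $(\iota,\theta):\bG\hookrightarrow U(N)\times(U(n)\rtimes\Gamma)$, where $\iota$ is any faithful unitary representation of $\bG$; an extension of $\varphi$ along this embedding, projected to the second factor and restricted to the image, supplies precisely the information about $\theta\circ\varphi$ that drove the (e)-analysis. This reduces (c) to the same irrep-level calculus and closes the cycle of equivalences.
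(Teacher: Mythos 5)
Your skeleton for the hard implication matches the paper's: reduce to the action of $\varphi$ on irreducible unitary representations, and use semidirect products $U(n)\rtimes\Gamma$ to forbid the ``dual twist''. But the two load-bearing steps are not actually supplied, and in one case your concrete suggestion works against you. To rule out $\pi\circ\varphi\cong\bar\pi$ you need a semidirect product $U(n)\rtimes\Gamma$ to which complex conjugation does \emph{not} lift: then the restriction to $U(n)$ of any extension of $\varphi$ is forced to be inner. Your proposed $\Gamma=\bZ/2$ acting by complex conjugation achieves the opposite --- in $U(n)\rtimes\bZ/2$ conjugation by the generator of $\bZ/2$ \emph{is} an extension of complex conjugation, so every automorphism of $U(n)$ lifts to that overgroup and the test detects nothing. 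Producing a $\Gamma$ with the required non-lifting property is genuinely delicate: the paper does it (\Cref{le:whenlifts} through \Cref{le:urtimesgamma}) by realizing a cohomology class $\overline{z}\in H^2(\Gamma,\bC^{\times})$ of order $>2$ via a projective representation of a finite group $\Gamma$ having only inner automorphisms, so that $\overline{z}^{-1}$ is not in the $\Aut(\Gamma)$-orbit of $\overline{z}$; this cohomological obstruction is the technical heart of the proof and is absent from your sketch.

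The second gap is the coherence step. Knowing $\pi\circ\varphi\cong\pi$ for every irreducible $\pi$ gives you, for each finite family $\cF$, a unitary $A_\cF$ conjugating $\pi_\cF$ to $\pi_\cF\circ\varphi$; it does \emph{not} give you an element of $\pi_\cF(\bG)$ (mod center) doing so, and no further ``$U(n)\rtimes\Gamma$-style test'' produces one. Non-emptiness of your sets $C_\cF$ is precisely the assertion that the automorphism induced by $\varphi$ on the compact connected Lie group $\pi_\cF(\bG)$ is inner, i.e.\ the Lie-group case of the theorem being proved; for finite groups the analogous statement fails (class-preserving outer automorphisms exist), so this is where connectedness must do real work --- not in the trivial-versus-inner dichotomy you attribute it to. The paper closes exactly this gap by citing \cite[Corollary 2]{mcm-dual}: an automorphism of a compact connected group fixing the isomorphism class of every irreducible representation is inner. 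Two smaller points: continuous endomorphisms of $U(n)$ are not only ``trivial, inner, or conjugate-inner'' (consider $g\mapsto\det(g)^k g$), so you also need the preliminary reduction, via \Cref{pr:extembinj} and a suitable choice of test representation, that $\varphi$ is an automorphism and that its extensions restrict to automorphisms of $U(n)$; and your (c)$\Rightarrow$(a) detour through $U(N)\times(U(n)\rtimes\Gamma)$ needs an argument that the extension respects the product decomposition, which is what the dimension trick in the proof of \Cref{pr:extembinj} supplies.
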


Some small diversions are needed. The following result extends part of \cite[Proposition 2.3]{chi_inner_xv1_toappear} (which in turn proves slightly more than it states). Its aim is to rid us of the mild annoyance of endomorphisms that fail to be bijective. 

\begin{proposition}\label{pr:extembinj}
  For an endomorphism $\varphi\in\End(\bG)$ of a compact group, consider the following conditions:
  \begin{enumerate}[(a)]

  \item\label{item:embinj} $\varphi$ is extensible along every embedding into a compact connected group.
    
  \item\label{item:morbij} $\varphi$ is extensible along any morphism to a compact connected (algebraically simple) Lie group.
    
  \item\label{item:isbij} $\varphi$ is either trivial or an automorphism. 
    
  \end{enumerate}
  We then have
  \begin{equation*}
    \text{
      \Cref{item:embinj} $\xRightarrow{\quad}$ \Cref{item:morbij} $\xRightarrow{\quad}$ \Cref{item:isbij}
    }.
  \end{equation*}
  Similarly, dropping connectedness in both \Cref{item:embinj} and \Cref{item:morbij}, the former implies the other. 
\end{proposition}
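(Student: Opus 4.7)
The plan is to establish the two implications (a)~$\Rightarrow$~(b) and (b)~$\Rightarrow$~(c) separately, with the non-connected variant of (a)~$\Rightarrow$~(b) treated in parallel by essentially the same construction.

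For (a)~$\Rightarrow$~(b), given a morphism $\theta\colon\bG\to \bL$ with $\bL$ compact connected Lie, I would reduce the problem to extensibility along a suitable embedding. The idea is: realize $\bG$ as a closed subgroup $\iota_0\colon\bG\hookrightarrow \bG'$ of a compact connected group via Peter--Weyl (e.g.\ $\bG\hookrightarrow U(H)$ for a sufficiently large separable Hilbert space), and then form the graph-style embedding $\iota\colon\bG\hookrightarrow \bG'\times \bL$, $g\mapsto (\iota_0(g),\theta(g))$. This is a closed embedding into a compact connected group, so hypothesis (a) yields an extension $\Phi\in\End(\bG'\times \bL)$ of $\varphi$ along $\iota$. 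The desired $\varphi'\colon \bL\to\bL$ is then read off from the $\bL$-component of $\Phi|_{\{e\}\times\bL}$; the verification that $\varphi'\circ\theta=\theta\circ\varphi$ uses the product decomposition $\Phi(u,\ell)=\Phi(u,e)\cdot\Phi(e,\ell)$ and reduces to controlling the cross-term $\beta\colon g\mapsto \pi_\bL\Phi(\iota_0(g),e)$, which is itself a morphism $\bG\to \bL$. Removing connectedness of $\bG'$ everywhere gives the analogous implication in the non-connected setting, since the graph embedding and the same algebraic manipulations go through unchanged.

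For (b)~$\Rightarrow$~(c), I argue by contraposition. If $\varphi$ is neither trivial nor an automorphism, then either $\varphi$ has a nontrivial kernel (case~(i)) or it is injective but not surjective (case~(ii)). In case~(i), pick $g\in\ker\varphi\setminus\{e\}$ and, by $\varphi\ne 0$, some $h\in\bG$ with $\varphi(h)\ne e$. Using Peter--Weyl together with the structure of compact connected groups as inverse limits involving simple compact adjoint Lie groups, I would produce a morphism $\theta\colon \bG\to\bL$ with $\bL$ algebraically simple compact connected Lie such that both $\theta(g)\ne e$ and $\theta\varphi(h)\ne e$. Any extension $\varphi'$ would then satisfy $\varphi'(\theta(g))=\theta\varphi(g)=e$, so $\theta(g)\in\ker\varphi'\setminus\{e\}$; algebraic simplicity of $\bL$ forces $\ker\varphi'=\bL$ and hence $\varphi'$ trivial, contradicting $\varphi'\theta(h)=\theta\varphi(h)\ne e$. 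Case~(ii) is analogous: choose $\theta$ so that $\theta(\varphi(\bG))$ is a proper closed subgroup of $\theta(\bG)\subseteq\bL$, and note that any extension $\varphi'$ must be either trivial or an automorphism of $\bL$ by algebraic simplicity, with either option yielding a contradiction (the automorphism case via a Haar-measure comparison between $\theta(\bG)$ and $\theta\varphi(\bG)$).

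The step I expect to be most delicate is the verification in (a)~$\Rightarrow$~(b): the extension $\Phi$ is far from unique and need not respect the factor decomposition of $\bG'\times \bL$, so the cross-term $\beta\colon\bG\to\bL$ above may be nonzero and must be absorbed either by a careful re-choice of $\bG'$ (so that $\Hom(\bG',\bL)$ is appropriately restricted on the image of $\bG$) or by modifying $\Phi$ after the fact. A secondary but nontrivial point, in (b)~$\Rightarrow$~(c), is the existence of algebraically simple Lie quotients separating a prescribed finite set of non-identity elements; this relies on the Peter--Weyl structure of $\bG$ and on passing from unitary representations to their projective (adjoint) quotients.
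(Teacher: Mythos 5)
Your implication (b)~$\Rightarrow$~(c) is essentially the paper's argument and is sound: injectivity via a simple target separating a kernel element (the paper instead cites an external result for this, so your direct argument is if anything more self-contained), and surjectivity by forcing an extension to be an automorphism of an algebraically simple Lie group that would have to carry $\theta(\bG)$ onto a proper closed subgroup of itself. Two small touch-ups: you must also arrange $\theta(\varphi(\bG))\neq\{1\}$, or the ``trivial $\varphi'$'' branch produces no contradiction (the paper explicitly imposes $\{1\}\lneq\iota(\varphi(\bG))\lneq\iota(\bG)$); and the final contradiction is best phrased as ``a compact \emph{Lie} group is not isomorphic to a proper closed subgroup of itself'' (dimension plus component count) rather than a Haar-measure comparison, since a non-Lie compact group can perfectly well be isomorphic to a proper closed subgroup of itself, so the Lie hypothesis must enter somewhere.

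The genuine gap is in (a)~$\Rightarrow$~(b), precisely at the step you flag as delicate and then leave open. With $\bG'$ a product of unitary groups (and certainly not with ``$U(H)$'' for infinite-dimensional $H$, which is not a compact group), the cross-term $\beta\colon u\mapsto\pi_{\bL}\Phi(u,e)$ has no reason to vanish: $\Hom(\bG',\bL)$ is typically large (determinants, restrictions of representations, etc.), and there is no evident way to ``modify $\Phi$ after the fact.'' The paper's resolution is exactly the ``careful re-choice of $\bG'$'' you allude to, and it is the one nontrivial idea of this half: embed $\bG$ into a product of compact connected Lie groups (Peter--Weyl), embed each factor into a compact \emph{connected algebraically simple} Lie group of dimension strictly larger than $\dim\bL$ (this uses a non-obvious embedding lemma from the companion paper), and take $\bG'=\bK$ to be the resulting product. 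Then every morphism from a simple factor of $\bK$ to $\bL$ is trivial (its kernel is trivial by algebraic simplicity, so a nontrivial morphism would force $\dim\bL$ too large), hence every morphism $\bK\to\bL$ is trivial, the extension $\Phi$ of $\varphi$ along $(\theta,\iota)\colon\bG\hookrightarrow\bL\times\bK$ preserves the factor $\bK$, and $\Phi$ descends along the projection onto $\bL$ to the desired extension of $\varphi$ along $\theta$. Without this choice of auxiliary overgroup the argument does not close; with it, your outline becomes the paper's proof, and the disconnected variant goes through verbatim as you say.
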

\begin{proof}
  Regarding \Cref{item:embinj} and \Cref{item:morbij}, we focus on the connected version of the result; the other admits a virtually identical proof. 
  
  \begin{description}[style=unboxed, leftmargin=*, wide=0pt]
    
  \item{\bf \Cref{item:embinj} $\xRightarrow{\quad}$ \Cref{item:morbij}:} The claim is that \Cref{item:morbij} follows from the {\it weakest} version of \Cref{item:embinj}. To see this, consider a morphism $\bG\xrightarrow{\theta}\bH$ to a compact connected Lie group. On the one hand $\bG$ embeds into a product of compact connected Lie groups \cite[Corollary 2.36]{hm4}, while on the other, every compact Lie group embeds \cite[Lemma 2.4]{chi_inner_xv1_toappear} into a compact, connected, algebraically simple Lie group. This means that there is an embedding $\bG\lhook\joinrel\xrightarrow{\iota}\bK$ into a product of compact, connected, algebraically simple Lie groups, all of dimension larger than $\dim \bH$.
    
    The hypothesis ensures the extensibility of $\varphi$ along the {\it embedding}
    \begin{equation}\label{eq:thetapi}
      \bG\xrightarrow{\quad (\theta,\iota)\quad}\bH\times \bK. 
    \end{equation}
    Because the individual simple factors of $\bK$ have dimension larger than $\dim \bH$, they only map to the latter trivially. This means that every endomorphism of $\bH\times \bK$ leaves the factor $\bK$ invariant, so that {\it every} endomorphism of $\bH\times \bK$ extends along the first projection $\bH\times \bK\xrightarrow{\pi_1} \bH$. Composing the two extensions of $\varphi$ (first along \Cref{eq:thetapi} and then along $\pi_1$), we obtained the desired extensibility along $\theta$.

  \item{\bf \Cref{item:morbij} $\xRightarrow{\quad}$ \Cref{item:isbij}:} We prove the strictest version of the claim. Assuming $\varphi$ is not trivial, its injectivity follows from \cite[Proposition 2.3, part (1) of the proof]{chi_inner_xv1_toappear}. The argument for surjectivity, on the other hand, is a slight modification of \cite[Proposition 2.3, part (2) of the proof]{chi_inner_xv1_toappear}.
   
    Assume that $\varphi$ is {\it not} onto (and also non-trivial, i.e. $\bG\ne \{1\}$). We can then find a morphism $\bG\xrightarrow{\iota}\bH$ into a compact connected algebraically simple Lie group with
    \begin{equation*}
      \{1\}
      \lneq
      \iota(\varphi(\bG))
      \lneq
      \iota(\bG)\le \bH. 
    \end{equation*}
    In the commutative diagram
    \begin{equation*}
      \begin{tikzpicture}[auto,baseline=(current  bounding  box.center)]
        \path[anchor=base] 
        (0,0) node (l) {$\bG$}
        +(2,.5) node (u) {$\bG$}
        +(2,-.5) node (d) {$\bH$}
        +(4,0) node (r) {$\bH$}
        ;
        \draw[->] (l) to[bend left=6] node[pos=.5,auto] {$\scriptstyle \varphi$} (u);
        \draw[->] (u) to[bend left=6] node[pos=.5,auto] {$\scriptstyle \iota$} (r);
        \draw[->] (l) to[bend right=6] node[pos=.5,auto,swap] {$\scriptstyle \iota$} (d);
        \draw[->] (d) to[bend right=6] node[pos=.5,auto,swap] {$\scriptstyle \varphi'$} (r);
      \end{tikzpicture}
    \end{equation*}
    the top path is non-trivial and hence so is $\varphi'$. But then the latter must be an {\it auto}morphism of $\bH$ (injective by algebraic simplicity and onto for dimension reasons), so it cannot shrink $\iota(\bG)$ properly into
    \begin{equation*}
      \iota(\varphi(\bG)) = \varphi'(\iota(\bG)).
    \end{equation*}
    This gives the desired contradiction, finishing the proof.    
  \end{description}
\end{proof}

\begin{remark}\label{re:whydifferent}
  \cite[Proposition 2.3, part (2) of the proof]{chi_inner_xv1_toappear} at one point relies on the {\it functorial} extensibility assumed there, hence the need for the slight alteration in the argument above.
\end{remark}


As part of a series of remarks that will eventually plug back into the proof of \Cref{th:conninner}, note that requiring extensibility along morphisms into possibly {\it dis}connected compact groups is crucial. \Cref{le:aut-1} provides a first inkling of this. Recall \cite[Definitions 9.30]{hm4} that a {\it pro-torus} is a compact, connected, abelian group. 

\begin{lemma}\label{le:aut-1}
  Given a pro-torus $\bA\le \bG$ of a compact, connected group, there is an automorphism of $\bG$ that restricts to $-\id$ on $\bA$.
\end{lemma}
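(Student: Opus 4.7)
The plan is to reduce to the case where $\bA$ is itself a \emph{maximal} pro-torus of $\bG$, and then construct the desired automorphism by hand using the classical structure theory of compact connected groups. By Zorn's lemma every pro-torus embeds into a maximal pro-torus $\bT\supseteq \bA$ (increasing unions of closed pro-tori have pro-toral closures), so it is enough to produce an automorphism of $\bG$ restricting to $-\id$ on the larger group $\bT$.

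For the construction I would invoke the Levi--Mal\'cev-type structure theorem for compact connected groups \cite[Chapter 9]{hm4}: there is a surjection with central kernel
\[
  \pi\colon \bZ_0 \times \prod_{i\in I}\bS_i \;\longrightarrow\; \bG,
\]
where $\bZ_0:=Z_0(\bG)$ is the pro-toral identity component of the center and each $\bS_i$ is a simply connected compact simple Lie group. Choose maximal tori $\bT_i\le \bS_i$; then $\bZ_0\times \prod_i \bT_i$ is a maximal pro-torus of the cover, and its $\pi$-image is a maximal pro-torus of $\bG$. Since maximal pro-tori are conjugate, I may (precomposing with an inner automorphism) assume $\bT$ is precisely this image. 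For each simple factor $\bS_i$ classical root-system theory produces a Cartan (a.k.a.~Weyl or Chevalley) involution $\sigma_i\in \Aut(\bS_i)$ restricting to $-\id$ on $\bT_i$: when $-1$ lies in the Weyl group (types $A_1$, $B_n$, $C_n$, $D_{2n}$, $E_7$, $E_8$, $F_4$, $G_2$) conjugation by a lift of the longest Weyl element $w_0$ works, and in the remaining cases ($A_{\ge 2}$, $D_{2n+1}$, $E_6$) one post-composes $w_0$ with the non-trivial Dynkin-diagram involution.

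Pairing $-\id_{\bZ_0}$ with all the $\sigma_i$'s yields an automorphism $\widetilde\sigma$ of the cover, and the only remaining --- and I expect slightly subtle --- step is to show that $\widetilde\sigma$ descends to $\bG$, i.e.~that $\ker\pi$ is $\widetilde\sigma$-stable. The key observation is that each $\sigma_i$ restricts to $-\id$ on all of $\bT_i$ and hence to \emph{inversion} on the finite center $Z(\bS_i)\subseteq \bT_i$; consequently $\widetilde\sigma$ acts by the inversion map on the entire center $\bZ_0\times \prod_i Z(\bS_i)$ of the cover. Because $\ker\pi$ is a subgroup of this center, it is automatically preserved by inversion; $\widetilde\sigma$ therefore descends to an automorphism of $\bG$ which by construction restricts to $-\id$ on $\bT\supseteq \bA$, as desired. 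The main thing that must really be verified --- and that makes the argument work --- is this ``inversion on the center'' property of each Cartan involution $\sigma_i$.
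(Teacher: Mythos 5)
Your argument is correct and follows essentially the same route as the paper: both present $\bG$ as a quotient of $\bT\times\prod_i\bS_i$ ($\bT$ a pro-torus, the $\bS_i$ simply connected simple) by a $0$-dimensional central subgroup and reduce to producing, on each simple factor, an automorphism acting as $-\id$ on a maximal torus --- which the paper imports from \cite[Proposition 14.3]{hmph_liealg} and you build from the longest Weyl element composed, where needed, with the diagram involution. Your preliminary reduction to a maximal pro-torus via conjugacy is not in the paper (which instead tracks the image of $\bA$ in each factor), but the key descent observation --- the automorphism inverts the central kernel because centers sit inside maximal tori --- is the same in substance.
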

\begin{proof}
  Per standard structure theory \cite[Theorem 9.24]{hm4}, $\bG$ is a quotient of a compact product
  \begin{equation*}
    \bT\times \prod_{i\in I}\bS_i
    ,\quad
    \bT\text{ a pro-torus}
    ,\quad
    \bS_i\text{ simple, connected, simply-connected Lie}
  \end{equation*}
  by a 0-dimensional central subgroup. The image of $\bA$ in each of the factors $\bS_i$ is a torus (being compact, connected, abelian and Lie). Because maximal tori in $\bS_i$ (being their own centralizers \cite[Theorem 9.32 or Corollary 6.33]{hm4}) respectively contain the centers of $\bS_i$, it is enough to argue that for a compact, connected, simple Lie group $\bS$ there is an automorphism restricting to $-\id$ on any given maximal torus $\bT\subset \bS$.
  
  We may as well assume without loss of generality that $\bS$ is simply-connected, whereupon the claim is a consequence of its complex-Lie-algebra analogue \cite[Proposition 14.3]{hmph_liealg}: a complex, simple Lie algebra has an automorphism acting as $-\id$ on any given Cartan subalgebra. 
\end{proof}

\begin{remarks}\label{res:posttor-1}
  \begin{enumerate}[(1), leftmargin=*, wide=0pt]
  \item As already hinted at, it is natural to wonder to what extent connectedness is crucial to \Cref{le:aut-1}. Since the only automorphisms of a finite group extensible along embeddings into finite groups are inner \cite[Corollary]{pettet_inner-fin}, the two connectedness constraints can certainly not {\it both} be dropped. In fact, we will see that in fact neither condition can be dropped in isolation (while retaining the full generality of \Cref{le:aut-1} otherwise): \hyperref[exs:adisc]{Examples~\ref*{exs:adisc}} address the (dis)connected of $\bA$, while \Cref{ex:unnotext} is concerned with that of $\bG$.
    
  \item Once the setup of \Cref{le:aut-1}, the requisite automorphism of $\bG$ can even be chosen inner, so long as $-1$ belongs to the Weyl group. This happens \cite[\S 13.4, Exercise 5]{hmph_liealg} in types $A_1$, $B_{\ell}$, $C_{\ell}$, $D_{2k}$, $E_7$, $E_8$, $F_4$ and $G_2$.
  \end{enumerate}
\end{remarks}

The subgroup $\bA\le \bG$ of \Cref{le:aut-1} cannot in general be assumed disconnected, even if $\bG$ is connected.

\begin{examples}\label{exs:adisc}
  There compact, simply-connected Lie groups containing (necessarily disconnected) maximal abelian subgroups whose inversion automorphism $x\xmapsto{}-x$ does not lift to the ambient group.
  \begin{enumerate}[(1), leftmargin=*, wide=0pt]
  \item {\bf : Type $F_4$.} If $\bG$ is the simple compact Lie group of type $F_4$, there is a maximal abelian subgroup \cite[equation (75)]{yu_maxab}
    \begin{equation*}
      (\bZ/3)^3\cong \bA\subset \bG
    \end{equation*}
    whose Weyl group (i.e. quotient of the normalizer) consists precisely of the {\it unimodular} $3\times 3$ matrices over the field $\bF_3$ \cite[Proposition 3.14]{yu_maxab} (see also \cite[Theorem 5.22]{ek_liegradings}). In particular, the inversion automorphism of $\bA$ does not extend to an inner automorphism of $\bG$; but {\it all} automorphisms are inner \cite[p.953]{afg_max}.

  \item\label{item:exs:adisc-e6} {\bf : Type $E_6$.} Let $\bG$ be compact, simply-connected of type $E_6$. Its {\it adjoint} quotient $\bG_{ad}:=\bG/Z(\bG)$ (by the order-3 center: see the list immediately preceding \cite[\S 13.2]{hmph_liealg}) has an abelian subgroup
    \begin{equation*}
      (\bZ/3)^3\cong \overline{\bA}\subset \bG_{ad}
    \end{equation*}
    which, when lifted to $\bG$, generates a maximal abelian (finite) subgroup $\bA$ together with the center $Z(\bG)$:
    \begin{equation*}
      \bA:=\text{preimage of $\overline{\bA}$ through }\bG\xrightarrowdbl{\quad}\bG/Z(\bG). 
    \end{equation*}
    The group $\overline{\bA}$ is the one recorded in \cite[$F_2$, Table 4, second row]{yu_maxab}, and its Weyl group is again $SL_3(\bF_3)$ \cite[Proposition 4.4]{yu_maxab}. It thus follows that no inner automorphism of $\bG_{ad}$ can operate on $\overline{\bA}$ as $-\id$. Since three generators of $\overline{\bA}$ lift to a {\it rank-0 commuting triple} \cite[\S 1.9]{bfm_almostcomm} in $\bG$ and the latter's automorphisms operate on such triples as conjugations \cite[Proposition 5.3.1]{bfm_almostcomm}, it follows that no automorphism of $\bG$ can invert $\bA$. 
    
  \item {\bf : Type $E_8$.} The only compact Lie group of type $E_8$ is simply-connected (and center-less, e.g. by the selfsame \cite[discussion preceding \S 13.2]{hmph_liealg}), so no issues of lifting arise.

    The group listed in \cite[$F_1$, Table 7, row 1]{yu_maxab} is maximal abelian and isomorphic to $(\bZ/5)^3$, with Weyl group $SL_3(\bF_5)$ \cite[Proposition 7.2]{yu_maxab} (hence excluding $-\id$). Because the automorphisms of $E_8$ are all inner (e.g. by \cite[\S 16.5]{hmph_liealg}; the type-$E_8$ Dynkin diagram has no non-trivial automorphisms), the conclusion follows. 
  \end{enumerate}
\end{examples}

\begin{remark}\label{re:e6}
  \cite[Propositions 1.28 and 1.32]{ek_liegradings} explain how (possibly disconnected) maximal abelian subgroups of adjoint simple compact Lie groups relate to the {\it fine gradings} \cite[\S 1.3]{ek_liegradings} of the corresponding simple Lie algebras (the gradings are by the discrete abelian {\it Pontryagin duals} \cite[\S 3.5]{de} of the respective maximal groups).

  The literature on fine gradings is vast (as the above monograph's reference list will confirm). So far as type $E_6$ goes, the full list of such gradings appears, for instance, in \cite[Figure 6.2, p.268, first column]{ek_liegradings} or \cite[Theorem 1, table]{ev_finegre6}. Note, though, that those lists do {\it not} contain the group denoted by $\overline{\bA}$ in \Cref{exs:adisc}\Cref{item:exs:adisc-e6}: the reason is that the class of subgroups of a compact simple Lie group classified in \cite{yu_maxab} (those satisfying \cite[Introduction, condition (*)]{yu_maxab}) includes \cite[\S 2.5]{yu_maxab} that of maximal subgroups {\it strictly}: $\overline{\bA}$ is not maximal in the adjoint compact form of $E_6$, but its lift to the simply-connected form is maximal therein.
\end{remark}

We next need what must be a straightforward observation, added here for completeness and because I have been unable to find it in the literature (not in the desired form, ready-citable). Following standard practice (e.g. \cite[Definition preceding Theorem 9.70]{rot_halg}), we refer to a group morphism
\begin{equation}\label{eq:projrep}
  \Gamma
  \xrightarrow{\quad\rho\quad}
  \bG/Z(\bG)
\end{equation}
as a {\it projective representation} of $\Gamma$ in $\bG$ ($\Gamma$ will typically be discrete). $\rho$ gives rise (\cite[proof of Theorem 9.70]{rot_halg} or \cite[\S VII.3]{ser_locf}) to a {\it 2-cocycle}
\begin{equation}\label{eq:correspcocyc}
  z=z_{\rho}\in Z^2(\Gamma,\ Z(\bG))
  ,\quad
  z(\gamma,\gamma'):=\widetilde{\rho}(\gamma)\widetilde{\rho}(\gamma')\widetilde{\rho}(\gamma\gamma')^{-1}
\end{equation}
for a lift $\Gamma\xrightarrow{\widetilde{\rho}}\bG$ of $\rho$ (the $\rho\xleftrightarrow{\quad}\widetilde{\rho}$ notation will recur). In parsing the following statement, note that a projective representation \Cref{eq:projrep} induces an action of $\Gamma$ on $\bG$ by conjugation:
\begin{equation*}
  \bG\ni x\xmapsto{\quad \gamma\quad}
  \tensor[^{\gamma}]{x}{}
  :=
  \widetilde{\rho}(\gamma)x\widetilde{\gamma}(\gamma)^{-1}. 
\end{equation*}
It thus makes sense to speak of the resulting semidirect product $\bG\rtimes \Gamma$. 

\begin{lemma}\label{le:whenlifts}
  Let \Cref{eq:projrep} be a projective representation with corresponding cocycle \Cref{eq:correspcocyc}.

  An automorphism $\alpha$ of $\bG$ lifts to $\bG\rtimes \Gamma$ if and only if the cohomology class of $\alpha(z)$ is in the orbit of $\Aut(\Gamma)$ acting on $H^2(\Gamma,\ Z(\bG))$. 
\end{lemma}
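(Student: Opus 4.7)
The plan is to reduce the lifting problem to a cocycle computation. Any automorphism $\widetilde{\alpha}$ of $\bG\rtimes\Gamma$ extending $\alpha$ preserves the subgroup $\bG$ and hence descends to some $\sigma\in\Aut(\Gamma)$ on the quotient; writing $\widetilde{\alpha}(1,\gamma)=(g_\gamma,\sigma(\gamma))$, the key object is
\[
  h(\gamma) := g_\gamma\,\widetilde{\rho}(\sigma(\gamma))\in\bG.
\]
Two different ways of extracting the Schur-type $2$-cocycle of $h$ will produce the desired cohomological identity.

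For the forward direction, on the one hand multiplicativity of $\widetilde{\alpha}$ on pairs $(1,\gamma)(1,\gamma')=(1,\gamma\gamma')$ forces $g_\gamma\cdot{}^{\sigma(\gamma)}g_{\gamma'} = g_{\gamma\gamma'}$; combining this with $\widetilde{\rho}(\sigma(\gamma))\widetilde{\rho}(\sigma(\gamma'))=z(\sigma(\gamma),\sigma(\gamma'))\widetilde{\rho}(\sigma(\gamma\gamma'))$ gives
\[
  h(\gamma)h(\gamma')=z(\sigma(\gamma),\sigma(\gamma'))\,h(\gamma\gamma'),
\]
so $h$ realizes the cocycle $\sigma^{*}z$. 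On the other hand, the compatibility of $\widetilde{\alpha}$ with conjugation of $\bG$ by the $(1,\gamma)$ forces $\alpha(\widetilde{\rho}(\gamma))$ and $h(\gamma)$ to differ only by a central element $c(\gamma)\in Z(\bG)$; rewriting $h(\gamma)=c(\gamma)\alpha(\widetilde{\rho}(\gamma))$ and using that $\alpha\circ\widetilde{\rho}$ has cocycle $\alpha(z)$, a second computation identifies the cocycle of $h$ as $(\delta c)\cdot\alpha(z)$. Equating the two descriptions yields $[\alpha(z)]=[\sigma^{*}z]=\sigma\cdot[z]$ in $H^{2}(\Gamma,Z(\bG))$.

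For the converse, suppose $[\alpha(z)]=[\sigma^{*}z]$ for some $\sigma\in\Aut(\Gamma)$ and pick a $1$-cochain $c\colon\Gamma\to Z(\bG)$ realizing $\sigma^{*}z=(\delta c)\cdot\alpha(z)$. Run the argument backwards: set $h(\gamma):=c(\gamma)\alpha(\widetilde{\rho}(\gamma))$, define $g_\gamma:=h(\gamma)\widetilde{\rho}(\sigma(\gamma))^{-1}$, and let $\widetilde{\alpha}(g,\gamma):=(\alpha(g)\,g_\gamma,\sigma(\gamma))$. The identity $h(\gamma)h(\gamma')=z(\sigma(\gamma),\sigma(\gamma'))h(\gamma\gamma')$, combined with centrality of $c(\gamma)$, implies the non-abelian cocycle relation for $g_\gamma$ that is needed for $\widetilde{\alpha}$ to be a homomorphism; multiplicativity on $\bG$ is clear, and the conjugation compatibility follows from the definition of $h$. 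A symmetric construction from $(\alpha^{-1},\sigma^{-1})$ supplies a two-sided inverse, so $\widetilde{\alpha}$ is an automorphism extending $\alpha$.

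I expect the main obstacle to be purely bookkeeping. Two points warrant care: that $\Gamma$ acts trivially on $Z(\bG)$ (because conjugation by any $\widetilde{\rho}(\gamma)\in\bG$ fixes the center), so that $H^{2}(\Gamma,Z(\bG))$ is genuine abelian group cohomology; and that $\alpha$ acts on cocycles through its restriction to $Z(\bG)$ while $\sigma$ acts by pullback on the source, two actions that must be carefully distinguished when tracking $[\alpha(z)]=\sigma\cdot[z]$. Once those are pinned down, the semidirect-product calculations are mechanical.
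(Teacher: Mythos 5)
Your proof is correct and follows essentially the same route as the paper's: decompose the putative extension on the $\Gamma$-part as a $\bG$-valued twisted $1$-cocycle times an automorphism $\sigma\in\Aut(\Gamma)$, use compatibility with $\alpha$ to pin that cocycle down modulo $Z(\bG)$, and translate the multiplicativity constraint into the statement that $\alpha(z)\cdot(\sigma^{*}z)^{-1}$ is a coboundary. Your auxiliary function $h(\gamma)=g_\gamma\widetilde{\rho}(\sigma\gamma)$ is just a cleaner bookkeeping device for the same computation, and your observations about the trivial $\Gamma$-action on $Z(\bG)$ and the two distinct actions on cocycles are accurate.
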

\begin{proof}
  An extension of $\alpha$ to $\bG\rtimes \Gamma$, restricted to $\Gamma$, will be of the form
  \begin{equation*}
    \Gamma\ni \gamma
    \xmapsto{\quad}
    a_{\gamma}\theta(\gamma)
    \in \bG\rtimes \Gamma
  \end{equation*}
  for
  \begin{itemize}
  \item an automorphism $\theta\in \Aut(\Gamma)$;
  \item and a (non-abelian) 1-cocycle
    \begin{equation*}
      (a_{\gamma})_{\gamma}\in Z^1(\Gamma,\ \bG),
    \end{equation*}
    where the action of $\Gamma$ on $\bG$ pertinent to the cocycle space is the original one, twisted by $\theta$:
    \begin{equation*}
      \bG\ni x
      \xmapsto{\quad\gamma\quad}
      \tensor[^{\theta\gamma}]{x}{}
      =
      \widetilde{\rho}(\theta\gamma)x\widetilde{\rho}(\theta\gamma)^{-1}. 
    \end{equation*}
    This is a version of the correspondence \cite[\S I.5.1, Exercise 1]{ser_galcoh} between 1-cocycles and lifts of the surjection $\bG\rtimes\Gamma\to \Gamma$; the requisite cocycle condition is
    \begin{equation}\label{eq:aiscocyc}
      a_{\gamma\gamma'}
      =
      a_{\gamma}\cdot \tensor[^{\theta\gamma}]{a}{_{\gamma'}}
      ,\quad
      \forall \gamma,\ \gamma'\in \Gamma.
    \end{equation}        
  \end{itemize}
  The compatibility condition between $\alpha$ and the desired extension (after some processing) reads
  \begin{equation*}
    a_{\gamma}c_{\gamma} = \alpha\left(\widetilde{\rho}\gamma\right)\cdot \widetilde{\rho}\left(\theta \gamma\right)^{-1}
    ,\quad
    \forall \gamma\in \Gamma
    ,\quad \text{where }c_{\gamma}\in Z(\bG). 
  \end{equation*}
  The cocycle constraint \Cref{eq:aiscocyc} now simply means that the 2-cocycle
  \begin{equation*}
    \left(\Gamma^2\ni (\gamma,\gamma')
    \xmapsto{\quad}
    \alpha(z(\gamma,\gamma'))\cdot z(\theta\gamma,\theta\gamma')^{-1}\right)
  \in Z^2(\Gamma,\ Z(\bG))
  \end{equation*}
  is the coboundary of the 1-cochain $(c_{\gamma})_{\gamma}$. In short, finding such $(c_{\gamma})_{\gamma}$ and $\theta$ is equivalent to the class in $H^2(\Gamma,\ Z(\bG))$ of $\alpha(z)$ coinciding with that of $z$ twisted by $\theta\in \Aut(\Gamma)$. 
\end{proof}

\begin{corollary}\label{cor:whenextun}
  Let $\Gamma\xrightarrow{\rho}PU(n)$ be a projective representation, $z\in Z^2(\Gamma,\ Z(U(n)))$ the corresponding 2-cocycle \Cref{eq:correspcocyc}, and $\overline{z}\in H^2(\Gamma,\ Z(U(n)))$ its cohomology class.

  The following conditions are equivalent:
  \begin{enumerate}[(a)]
  \item\label{item:allautos} Every automorphism of $U(n)$ lifts to an automorphism of $U(n)\rtimes\Gamma$.

  \item\label{item:allouterautos} All outer automorphisms of $U(n)$ lift to automorphisms of $U(n)\rtimes\Gamma$.

  \item\label{item:1outerauto} Any one specific outer automorphism of $U(n)$ (e.g. complex conjugation) lifts to $U(n)\rtimes\Gamma$.

  \item\label{item:z-1} $\overline{z}^{-1}$ is in the orbit of $\overline{z}$ under $\Aut(\Gamma)$. 
  \end{enumerate}
\end{corollary}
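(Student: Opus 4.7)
The plan is to reduce the entire statement to \Cref{le:whenlifts} by pinning down how $\Aut(U(n))$ acts on the center $Z(U(n)) \cong U(1)$. The implications \Cref{item:allautos}$\Rightarrow$\Cref{item:allouterautos}$\Rightarrow$\Cref{item:1outerauto} are free, so only \Cref{item:1outerauto}$\Rightarrow$\Cref{item:z-1} and \Cref{item:z-1}$\Rightarrow$\Cref{item:allautos} require argument.

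The key observation I would record up front is that $\Aut(U(n))$ is generated by the inner automorphisms together with complex conjugation $\sigma: g\mapsto \overline{g}$, and that $\sigma$ restricts to inversion $\lambda\mapsto \lambda^{-1}$ on $Z(U(n))=U(1)$ (since $\overline{\lambda}=\lambda^{-1}$ for $|\lambda|=1$), whereas inner automorphisms fix $Z(U(n))$ pointwise. Consequently, any $\alpha\in\Aut(U(n))$ acts on cocycles $z\in Z^2(\Gamma, Z(U(n)))$ by either $\alpha(z)=z$ (if inner) or $\alpha(z)=z^{-1}$ (if outer), and at the level of $H^2$ we have $\overline{\alpha(z)}=\overline{z}^{\pm 1}$ accordingly.

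Given this, \Cref{item:1outerauto}$\Rightarrow$\Cref{item:z-1} is just \Cref{le:whenlifts} applied to $\alpha=\sigma$: the lifting hypothesis translates directly into $\overline{z}^{-1}$ lying in the $\Aut(\Gamma)$-orbit of $\overline{z}$. Conversely, for \Cref{item:z-1}$\Rightarrow$\Cref{item:allautos}, every inner automorphism satisfies $\overline{\alpha(z)}=\overline{z}$ (which is trivially in the orbit, via $\id_\Gamma$) and thus lifts by \Cref{le:whenlifts}, while every outer automorphism satisfies $\overline{\alpha(z)}=\overline{z}^{-1}$ and lifts precisely under hypothesis \Cref{item:z-1}, again by \Cref{le:whenlifts}.

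There is no real obstacle here: the corollary is essentially a bookkeeping consequence of \Cref{le:whenlifts}, with the only non-formal input being the structure of $\Aut(U(n))$ modulo inner automorphisms and the action of $\sigma$ on $Z(U(n))$. The one thing to be careful about is that ``lifts to $U(n)\rtimes\Gamma$'' in the statement should be read as ``extends to an automorphism of $U(n)\rtimes\Gamma$ restricting to $\alpha$ on $U(n)$,'' which is exactly the notion of lift governed by \Cref{le:whenlifts}.
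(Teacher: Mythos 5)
Your proposal is correct and follows essentially the same route as the paper: both reduce everything to \Cref{le:whenlifts} using the structure $\Aut(U(n))\cong PU(n)\rtimes\bZ/2$ (inner automorphisms together with complex conjugation) and the fact that conjugation acts on $Z(U(n))$ as inversion while inner automorphisms fix it pointwise. The only cosmetic difference is that the paper dispatches the inner automorphisms by noting they extend to any overgroup outright, whereas you route them through the lemma via $\theta=\id_\Gamma$; both are fine.
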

\begin{proof}
  The equivalence \Cref{item:allautos} $\xLeftrightarrow{\quad}$ \Cref{item:allouterautos} of course holds in complete generality, since {\it inner} automorphisms of $U(n)$ certainly extend to any overgroup. That this is all further equivalent to \Cref{item:1outerauto} follows from the fact that
  \begin{equation*}
    \Aut(U(n))\cong PU(n)\rtimes \bZ/2
    ,\quad
    \bZ/2\text{ acting by complex conjugation}
  \end{equation*}
  (analogous to the classification of automorphisms for the type-$A$ simple Lie algebras, e.g. \cite[\S IX.5, Theorem 5]{jc}). 

  Finally, \Cref{item:1outerauto} $\xLeftrightarrow{\quad}$ \Cref{item:z-1} follows from \Cref{le:whenlifts} and the fact that complex conjugation operates on $Z(U(n))$ as inversion. 
\end{proof}

Returning, now, to the failure of \Cref{le:aut-1} for disconnected $\bG$:

\begin{example}\label{ex:unnotext}
  We consider the setting of \Cref{cor:whenextun}, with finite $\Gamma$. Every cohomology class in
  \begin{equation*}
    H^2(\Gamma,\ \bC^{\times})\cong H^2(\Gamma,\ \bS^1)\cong H^2(\Gamma,\ \bQ/\bZ)
  \end{equation*}
  is realizable via a projective representation into some $PU(n)$ (consider a central extension of $\Gamma$ by some finite subgroup of $\bQ/\bZ$ corresponding \cite[\S 5.3, Theorem 1]{gruen_coh} to said cohomology class, represent that extension in some $U(n)$, etc.). Because inner automorphisms act trivially on cohomology \cite[\S VII.5, Proposition 3]{ser_locf}, it will be enough to produce a group $\Gamma$ with
  \begin{itemize}
  \item only inner automorphisms;
  \item and a cohomology class $\overline{z}\in H^2(\Gamma,\ \bC^{\times})$ of order $>2$, so that its inverse is not in the (singleton!) orbit of $\overline{z}$ under $\Aut(\Gamma)$. 
  \end{itemize}
  This is not difficult to do, with $\Gamma$ {\it metabelian}, for instance (i.e. an extension of an abelian group by another). The metabelian groups of order $>2$ with only inner automorphisms are \cite[Theorem 1]{gr_metabinner} precisely those of the form
  \begin{equation*}
    \prod_{i}\left(\bZ/p_i^{n_i},+\right)\rtimes \left((\bZ/p_i^{n_i})^{\times},\cdot\right),
  \end{equation*}
  where the $p_i^{n_i}$ are finitely many distinct odd prime powers and
  \begin{equation*}
    \left((\bZ/p_i^{n_i})^{\times},\cdot\right)\cong \Aut\left(\bZ/p_i^{n_i}, +\right).
  \end{equation*}
  Consider, say, a product of two such factors:
  \begin{equation*}
    \Gamma =
    (\bZ/9,+)\rtimes (\bZ/6,+)
    \times
    (\bZ/27,+)\rtimes (\bZ/18,+).
  \end{equation*}
  The quotient $\bZ/6\times \bZ/18$ of $\Gamma$ is split, so
  \begin{equation}\label{eq:lhs}
    H^2(\bZ/6\times \bZ/18,\ \bC^{\times})\cong \bZ/6
  \end{equation}
  is a summand in $H^2(\Gamma,\ \bC^{\times})$ (\Cref{eq:lhs} is a simple computation via Lyndon-Hochschild-Serre \cite[Theorem 10.52]{rot_halg} say, given the well-known cohomology of finite cyclic groups \cite[Theorem 9.27]{rot_halg}). In particular, the latter group has non-involutive elements.
\end{example}

It is perhaps worth noting explicitly what types of uses one can put \Cref{ex:unnotext} to.

\begin{lemma}\label{le:urtimesgamma}
  For every positive integer $d$, there are compact Lie groups of the form $U(n)\rtimes\Gamma$ with $n$ divisible by $d$ to which no outer automorphism of $U(n)$ lifts. 
\end{lemma}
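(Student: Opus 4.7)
The plan is to feed \Cref{ex:unnotext} into \Cref{cor:whenextun}, with a divisibility tweak on the rank. Start with the metabelian group $\Gamma$ exhibited in \Cref{ex:unnotext} (or any group produced by that example's recipe) and the cohomology class $\overline{z}\in H^2(\Gamma,\ \bC^{\times})$ of order strictly greater than $2$. Since $\Gamma$ has only inner automorphisms and inner automorphisms act trivially on cohomology, the $\Aut(\Gamma)$-orbit of $\overline{z}$ is the singleton $\{\overline{z}\}$, and order-$>2$ means $\overline{z}^{-1}\neq\overline{z}$.

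Next, realize $\overline{z}$ by a projective representation $\rho\colon \Gamma\to PU(m)$ for some $m$, exactly as in \Cref{ex:unnotext}: lift $\overline{z}$ to a central extension of $\Gamma$ by a finite cyclic subgroup of $\bC^{\times}$, represent that extension faithfully in some $U(m)$, and project. To obtain the desired divisibility $d\mid n$, simply replace $\rho$ by the direct sum $\rho^{\oplus d}\colon \Gamma\to PU(md)$. A straightforward check (the cocycle $z$ takes values in the scalars, so block-diagonally copying a lift $\widetilde{\rho}$ yields a lift of $\rho^{\oplus d}$ with the \emph{same} cocycle $z$) shows that the associated class in $H^2(\Gamma,\ Z(U(md)))\cong H^2(\Gamma,\ \bC^{\times})$ is still $\overline{z}$. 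Set $n:=md$, which is divisible by $d$.

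Finally, apply \Cref{cor:whenextun} to $\rho^{\oplus d}\colon \Gamma\to PU(n)$. Condition \Cref{item:z-1} of that corollary fails because $\overline{z}^{-1}$ is not in the (singleton) $\Aut(\Gamma)$-orbit of $\overline{z}$. Hence its equivalent condition \Cref{item:allouterautos} fails too, i.e.\ no outer automorphism of $U(n)$ lifts to $U(n)\rtimes\Gamma$.

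There is no real obstacle here: the serious work has already been done in \Cref{ex:unnotext} (producing a group with only inner automorphisms yet carrying a non-involutive Schur multiplier class) and in \Cref{cor:whenextun} (translating liftability into a purely cohomological condition). The only thing that needs care is verifying that the direct-sum construction preserves the cohomology class while inflating the rank arbitrarily; that is a one-line check because the cocycle is scalar-valued.
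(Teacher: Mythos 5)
Your proposal is correct and follows the paper's own (one-line) proof exactly: take the projective representation furnished by \Cref{ex:unnotext} and replace it by a direct sum of $d$ copies, noting that the scalar-valued cocycle class is unchanged, so \Cref{cor:whenextun} still rules out lifting any outer automorphism. The extra detail you supply (the singleton $\Aut(\Gamma)$-orbit and the block-diagonal check) is a faithful expansion of what the paper leaves implicit.
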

\begin{proof}
  Once we have a projective representation affording an appropriate cocycle as in \Cref{ex:unnotext}, one can simply replace it with a sum of $d$ copies thereof. 
\end{proof}

\pf{th:conninner}
\begin{th:conninner}
  The downstream implications
  \begin{equation*}
    \text{
      \Cref{item:th:conninner-a}
      $\xRightarrow{\ }$
      \Cref{item:th:conninner-b}
      $\xRightarrow{\ }$
      \Cref{item:th:conninner-c}
      $\xRightarrow{\ }$
      \Cref{item:th:conninner-d}
      $\xRightarrow{\ }$
      \Cref{item:th:conninner-e}
    }
  \end{equation*}
  are self-evident, so addressing \Cref{item:th:conninner-e} $\xRightarrow{\ }$ \Cref{item:th:conninner-a} will suffice. We assume $\varphi$ non-trivial so that $\bG$ is non-trivial and (\Cref{pr:extembinj}) $\varphi$ is an automorphism. Consider unitary representations
  \begin{equation*}
    \begin{aligned}
      \bG&\xrightarrow{\quad \rho'\quad}U(d')\text{ non-trivial, irreducible, arbitrary and}\\
      \bG&\xrightarrow{\quad \rho\quad}U(d)\text{ defined by}
           \begin{cases}
             \rho=\rho'&\text{if }d'>1\\
             \rho=\left(\rho'\right)^{\oplus d}&\text{otherwise},
           \end{cases}
    \end{aligned}
  \end{equation*}
  where in the latter case $d$ is chosen so that the image $\varphi(\rho')$ of $\rho'$ under the action of $\varphi\in\Aut(\bG)$ on irreducible representations is {\it not} a power of $(\rho')^{\otimes d}$ (not a typo: that is a tensor power of a 1-dimensional representation, hence another such). 

  Next, consider the morphism

  \begin{equation*}
    \begin{tikzpicture}[auto,baseline=(current  bounding  box.center)]
      \path[anchor=base] 
      (0,0) node (l) {$\bG$}
      +(3,.5) node (u) {$U(n)$}
      +(5,0) node (r) {$U(n)\rtimes\Gamma$}
      ;
      \draw[->] (l) to[bend left=6] node[pos=.5,auto] {$\scriptstyle \rho^{\oplus (n/d)}$} (u);
      \draw[right hook->] (u) to[bend left=6] node[pos=.5,auto] {$\scriptstyle $} (r);
      \draw[->] (l) to[bend right=6] node[pos=.5,auto,swap] {$\scriptstyle \theta$} (r);
    \end{tikzpicture}
  \end{equation*}
  for $d|n$ and $\Gamma$ as in \Cref{le:urtimesgamma}. The choices made above ensure that an extension of $\varphi$ along $\theta$, when restricted to $U(n)$, would have to be an automorphism. That automorphism cannot be outer because of the choice of semidirect product via \Cref{le:urtimesgamma}. This means that (the isomorphism class of) $\rho^{\oplus(n/d)}$ and hence also $\rho'$ is fixed by $\varphi$; since $\rho'$ was an arbitrary irreducible representation of the {\it connected} compact group $\bG$, $\varphi$ is inner by \cite[Corollary 2]{mcm-dual}.
\end{th:conninner}


The discussion preceding \Cref{le:aut-1} alludes to possible ``misbehavior'' in \Cref{th:conninner} when imposing extensibility only along morphisms into connected groups. \Cref{le:aut-1} itself suggests that these phenomena might obtain when the base group is abelian, and the following result spells this out in full. 

\begin{theorem}\label{th:ab-conn}
  The following conditions on an endomorphism $\varphi\in\End(\bA)$ of a pro-torus are equivalent:
  \begin{enumerate}[(a)]

  \item\label{item:ab-trivpm1} $\varphi$ is trivial or $\pm\id$. 

  \item\label{item:ab-extmor} $\varphi$ extends along any morphism $\bA\xrightarrow{}\bH$ to a compact connected group.

  \item\label{item:ab-extemb} $\varphi$ extends along any embedding $\bA\lhook\joinrel\xrightarrow{}\bH$ into a compact connected group.
    
  \item\label{item:ab-extmor-lie} $\varphi$ extends along any morphism $\bA\xrightarrow{}\bH$ to a compact connected (algebraically simple) Lie group.

  \end{enumerate}
\end{theorem}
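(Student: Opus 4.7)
The implications \Cref{item:ab-extmor} $\Rightarrow$ \Cref{item:ab-extemb} (every embedding is a morphism) and \Cref{item:ab-extemb} $\Rightarrow$ \Cref{item:ab-extmor-lie} (the content of \Cref{pr:extembinj}) are immediate. For \Cref{item:ab-trivpm1} $\Rightarrow$ \Cref{item:ab-extmor}, the trivial endomorphism and the identity extend tautologically; the only case requiring comment is $-\id$, where given $\theta\colon \bA \to \bH$ with $\bH$ compact connected I would factor $\theta$ through its image $\bA' := \theta(\bA)$ (a pro-torus inside $\bH$), invoke \Cref{le:aut-1} to produce $\psi \in \Aut(\bH)$ with $\psi|_{\bA'} = -\id$, and observe that $\psi\theta(a) = -\theta(a) = \theta(-a)$ for every $a \in \bA$.

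The substantive direction is \Cref{item:ab-extmor-lie} $\Rightarrow$ \Cref{item:ab-trivpm1}. Assuming $\varphi$ non-trivial, \Cref{pr:extembinj} renders it an automorphism of $\bA$; the plan is then to Pontryagin-dualize and show $\widehat{\varphi} = \pm\id$ on the discrete torsion-free group $\widehat{\bA}$. The first step is to extract a per-character constraint using only the mildest algebraically simple Lie target, namely $SO(3) \cong PSU(2)$. For any non-trivial character $\chi\colon \bA \to \bS^1$, compose with the maximal-torus embedding $\bS^1 \hookrightarrow SO(3)$; by \Cref{item:ab-extmor-lie} some $\psi \in \End(SO(3))$ extends $\varphi$ along this morphism. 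Evaluating the commutativity at $a \in \ker\chi$ and using the injectivity of $\bS^1 \hookrightarrow SO(3)$ forces $\varphi(\ker\chi) \subseteq \ker\chi$, so $\chi \circ \varphi$ factors through $\bA/\ker\chi \cong \bS^1$ as $\overline{\varphi}_{\chi} \circ \chi$; since $\varphi$ is an automorphism this $\overline{\varphi}_{\chi}$ is surjective, hence lies in $\Aut(\bS^1) = \{\pm\id\}$. In short, $\widehat{\varphi}(\chi) \in \{\pm\chi\}$ for every non-zero $\chi \in \widehat{\bA}$.

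Passing from per-character signs to a uniform sign is an elementary exercise in character arithmetic. If non-zero $\chi_1, \chi_2$ carried opposite signs, say $\widehat{\varphi}(\chi_1) = \chi_1$ and $\widehat{\varphi}(\chi_2) = -\chi_2$, then $\widehat{\varphi}(\chi_1 + \chi_2) = \chi_1 - \chi_2$; comparing this to $\pm(\chi_1 + \chi_2)$ when $\chi_1 + \chi_2 \ne 0$ forces $2\chi_1 = 0$ or $2\chi_2 = 0$, while the degenerate case $\chi_1 + \chi_2 = 0$ reduces to the same conclusion directly. Torsion-freeness of $\widehat{\bA}$ (equivalent to the connectedness of $\bA$) then collapses one of the $\chi_i$ to zero, contradicting its non-triviality; so $\widehat{\varphi} = \pm\id$ and $\varphi = \pm\id$. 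The main obstacle here is perceptual rather than technical: recognizing that the modest target $SO(3)$ already encodes the full $\pm 1$ character-eigenvalue condition, thereby sidestepping any need for higher-rank Weyl-group considerations or Borel-density-type control over conjugacy classes in $GL_n(\bZ)$.
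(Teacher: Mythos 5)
Your overall architecture is sound and the cycle of implications closes correctly; the reduction to an automorphism via \Cref{pr:extembinj}, the invariance of $\ker\chi$, and the sign-uniformization via torsion-freeness of $\widehat{\bA}$ are all fine. The one genuine error is the inference ``$\overline{\varphi}_{\chi}$ is surjective, hence lies in $\Aut(\bS^1)=\{\pm\id\}$.'' Surjectivity of an endomorphism of $\bS^1$ does not make it an automorphism: every power map $z\mapsto z^{n}$ with $n\ne 0$ is surjective, and only $n=\pm 1$ are injective. What you have actually established at that point is only $\widehat{\varphi}(\chi)=n_{\chi}\chi$ for some non-zero integer $n_{\chi}$ --- and invoking ``$\varphi$ is an automorphism'' cannot close this gap on its own, since on a solenoid (say $\bA$ the Pontryagin dual of $\bZ[1/2]$) multiplication by $2$ genuinely is an automorphism, so integrality plus bijectivity does not exclude $|n_{\chi}|\ge 2$.

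The repair is available from data you already introduced but did not exploit: the extension $\psi\in\End(SO(3))$ itself. Since $\chi$ is onto $\bS^1$ (connectedness of $\bA$), commutativity forces $\psi$ to restrict on the maximal torus $\bT=\iota(\bS^1)$ to $z\mapsto z^{n_{\chi}}$. Now $SO(3)$ is algebraically simple, so $\psi$ is either trivial --- excluded, since that would give $n_{\chi}=0$ and hence $\varphi(\bA)\subseteq\ker\chi$, contradicting surjectivity of $\varphi$ --- or an automorphism, necessarily inner; an automorphism mapping the one-dimensional torus $\bT$ into itself maps it onto itself and therefore acts on it through the Weyl group $\bZ/2=\{\pm\id\}$. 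This yields $n_{\chi}=\pm1$, after which your sign-matching argument goes through verbatim. With that step supplied your route for the hard implication differs from the paper's, which instead deduces integer scaling from \Cref{le:torendo}; your per-character use of $SO(3)$ is arguably more self-contained, but the Weyl-group computation is exactly the content your closing remark alludes to and the written argument omits.
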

\begin{proof}
  \begin{description}[style=unboxed, leftmargin=*, wide=0pt]

  \item {\bf \Cref{item:ab-trivpm1} $\xRightarrow{\quad}$ \Cref{item:ab-extmor}:} Only the case $\varphi=-\id$ is interesting, and it is delivered by \Cref{le:aut-1}. 

  \item {\bf \Cref{item:ab-extmor} $\xRightarrow{\quad}$ \Cref{item:ab-extemb}} formally.

  \item {\bf \Cref{item:ab-extemb} $\xRightarrow{\quad}$ \Cref{item:ab-extmor-lie}:} This is part of \Cref{pr:extembinj}. 
    
  \item {\bf \Cref{item:ab-extmor-lie} $\xRightarrow{\quad}$ \Cref{item:ab-trivpm1}:} Indeed, $\varphi$ is either trivial or an automorphism by \Cref{pr:extembinj}, as well as scaling by an integer by \Cref{le:torendo}. The only options are to scale by $0$ or $\pm 1$.  \qedhere

  \end{description}
\end{proof}

\section{Extensibility in the category of compact abelian groups}\label{se:cpctab}

The ensuing discussion assumes some background on {\it profinite} groups \cite[\S 2.1]{rz_prof}, i.e. {\it cofiltered (or inverse, or projective) limits} (\cite[\S 1.1]{rz_prof} or \spr{04AY}) of finite quotient groups. These are also precisely \cite[Theorem 1.34]{hm4} the totally disconnected compact groups, and, when abelian, those whose Pontryagin duals are torsion \cite[Corollary 8.5]{hm4}.

Torsion abelian groups being modules in an obvious fashion over the {\it profinite completion} \cite[Example 2.1.6(2)]{rz_prof}
\begin{equation*}
  \widehat{\bZ}:=\varprojlim_n \bZ/n,\quad n\in \bZ_{>0}\text{ ordered by divisibility},
\end{equation*}
their Pontryagin duals (i.e. the profinite abelian groups) are similarly modules over $\widehat{\bZ}$ \cite[discussion preceding \S 5.2]{rz_prof}. In other words: the usual integer-scaling endomorphisms 
\begin{equation}\label{eq:scale}
  \bA\ni a\xmapsto{\quad}na\in \bA,\quad n\in \bZ,
\end{equation}
of abelian groups $\bA$, extend to actions of the profinite ring $\widehat{\bZ}\supset \bZ$ when $\bA$ is profinite. What is more, we have an isomorphism \cite[Example 2.3.11]{rz_prof}
\begin{equation*}
  \widehat{\bZ}\xrightarrow[\cong]{\qquad}\prod_{\text{primes }p}\bZ_p
\end{equation*}
with
\begin{equation*}
  \bZ_p:=\varprojlim_n \bZ/p^n=\text{the ring of {\it $p$-adic integers} \cite[Example 2.1.6(2)]{rz_prof}}
\end{equation*}
(e.g. as a particular instance of the decomposition \cite[Proposition 2.3.8]{rz_prof} of any profinite pro-nilpotent group as the product of its {\it $p$-Sylow subgroups}). $\widehat{\bZ}$ acts on {\it $p$-primary} torsion groups (i.e. \cite[preceding Theorem 1]{kap_infab} those whose elements have $p$-power orders) via its $\bZ_p$ quotient.

\begin{theorem}\label{th:abendo}
  Let $(\bA,+)$ be a compact abelian group and $\varphi\in\End(\bA)$.
  \begin{enumerate}[(1)]
    
  \item\label{item:abendo-all} $\varphi$ is extensible along morphisms into arbitrary compact abelian groups if and only if it is an integer-scaling map \Cref{eq:scale}.
    
  \item\label{item:abendo-tor} If the identity component $\bA_0$ is non-trivial (i.e. $\bA$ is not profinite), $\varphi$ is extensible along morphisms into (finite-dimensional) tori if and only if it is an integer-scaling map \Cref{eq:scale}.

  \item\label{item:abendo-prof} If $\bA$ is profinite, $\varphi$ extends along morphisms to (finite-dimensional) tori if and only it is scaling by some $n\in\widehat{\bZ}$.

  \item\label{item:abendo-protor} The conclusion of \Cref{item:abendo-tor} also holds for profinite $\bA$ if it either
    \begin{itemize}
    \item has a non-trivial torsion-free quotient;
    \item or infinitely many Sylow subgroups. 
    \end{itemize}

  \end{enumerate}
\end{theorem}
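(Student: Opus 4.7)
The plan is to pass to the Pontryagin dual side and analyze $\psi := \widehat{\varphi}$ as an endomorphism of the discrete abelian group $A := \widehat{\bA}$. A morphism $\theta : \bA \to \bH$ dualizes to $\widehat{\theta} : \widehat{\bH} \to A$, and $\varphi$ extends along $\theta$ if and only if some $\beta \in \End(\widehat{\bH})$ satisfies $\widehat{\theta}\beta = \psi\widehat{\theta}$. Morphisms $\bA \to \bT^n$ into a finite-dimensional torus correspond to $\bZ^n \to A$, i.e., to $n$-tuples $(a_1, \ldots, a_n) \in A^n$, and extensibility along such a $\theta$ amounts to the existence of an integer matrix $(M_{ij})$ with $\psi(a_j) = \sum_i M_{ij} a_i$. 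So extensibility along all torus-valued morphisms is equivalent to $\psi$ preserving every finitely generated subgroup of $A$.

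Preservation of every cyclic subgroup already forces $\psi(a) = m(a)\,a$ for some integer $m(a)$, well-defined modulo the order of $a$. Preservation of two-generator subgroups combined with additivity of $\psi$ then forces these $m(a)$'s to cohere: on any torsion-free portion of $A$ (via $\bZ$-linear independence in $\psi(a+a') = \psi(a)+\psi(a')$) they collapse to a single integer, while on torsion elements the local scalings across different prime components are linked by CRT through the value $m(a+a')$. Consequently, if $A$ has an element of infinite order---i.e., $\bA$ is not profinite---the scaling is a single integer, giving \Cref{item:abendo-tor}; if $A$ is torsion, the $p$-primary scalings assemble via $p$-adic inverse limits into an element of $\widehat{\bZ}$, giving \Cref{item:abendo-prof}. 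The reverse directions are easy: integer (respectively $\widehat{\bZ}$-) scaling extends along any torus target by choosing an integer $m$ congruent to the scaling element modulo the (finite) exponent of the image of the map.

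For \Cref{item:abendo-all}, extensibility is tested against all discrete abelian targets $B$, not just free ones. The decisive additional tests use divisible $B$ such as $\bQ$ or the localizations $\bZ[1/p]$: a morphism $B \to A$ with image a Pr\"ufer subgroup forces the lift $\beta \in \End(B)$ (multiplication by a rational) to match the $\bZ_p$-scaling of $\psi$ on that Pr\"ufer. Hence the $p$-component of the $\widehat{\bZ}$-scaling lies in $\bQ \cap \bZ_p = \bZ_{(p)}$; intersecting these constraints across all primes collapses the $\widehat{\bZ}$-element to $\bQ \cap \widehat{\bZ} = \bZ$, yielding integer scaling.

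The hardest part is \Cref{item:abendo-protor}, where one must upgrade $\widehat{\bZ}$-scaling to integer scaling using only torus targets plus one of the two structural hypotheses. I expect the argument to proceed as follows: under the torsion-free-quotient hypothesis, the dual side contains a Pr\"ufer subgroup $C \le A$, and coherence of $\psi$ probed against torus morphisms that see the unbounded tower of cyclic $p^k$-subgroups of $C$ should cut the $p$-component of the scaling down past mere $\bZ_p$; under the infinitely-many-Sylow hypothesis, a single high-dimensional torus simultaneously hosting nontrivial projections from many distinct Sylow components should allow a CRT-style collapse of the $\widehat{\bZ}$-scaling to a congruence class valid across all primes. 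Engineering the correct families of test morphisms into finite-dimensional tori to effect these collapses---delicate because tori by themselves naturally afford only $\widehat{\bZ}$-scalings---is where the main technical difficulty lies.
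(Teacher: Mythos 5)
Your dualization framework and your treatment of \Cref{item:abendo-tor} and \Cref{item:abendo-prof} are essentially sound and close in spirit to the paper's (the paper runs (2) on the compact side via quotients $\bT^d\times\bF$ and obtains the element of $\widehat{\bZ}$ in (3) as a cluster point of a net of integers, but these are cosmetic differences). The genuine gaps are in \Cref{item:abendo-all} and \Cref{item:abendo-protor}. For \Cref{item:abendo-all}, your decisive tests are morphisms from $\bQ$ or $\bZ[1/p]$ hitting Pr\"ufer subgroups of $\widehat{\bA}$ --- but the hard residual case is precisely when $\widehat{\bA}$ is \emph{reduced}, e.g.\ $\widehat{\bA}=\bigoplus_k\bZ/p^{n_k}$ with $n_k$ unbounded (dual to $\bA=\prod_k\bZ/p^{n_k}$, which does carry proper $\bZ_p$-scalings). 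There are then no Pr\"ufer subgroups, every morphism from $\bQ$ is zero, and every morphism from $\bZ[1/p]$ into such a reduced $p$-group is likewise zero (its image is $p$-divisible, hence divisible), so your tests detect nothing. The paper's device for this case is to map onto $\bD=\bigoplus_q\bZ/q$ a \emph{mixed} group $\bE$ sitting in a carefully chosen extension $0\to\bZ\to\bE\to\bD\to 0$ (the class in $\Ext(\bD,\bZ)\cong\Hom(\bD,\bQ/\bZ)$ embedding each cyclic summand into $\bQ/\bZ$), so that the kernel's endomorphism ring $\bZ$ pins the scaling down. Separately, even where Pr\"ufer subgroups do exist for every $p$, knowing that each component of $n\in\widehat{\bZ}=\prod_p\bZ_p$ individually lies in $\bQ\cap\bZ_p=\bZ_{(p)}$ does not place the tuple in the diagonal copy of $\bQ$, let alone $\bZ$; you still need cross-prime tests to force the components to agree.

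For \Cref{item:abendo-protor} you have read the hypothesis as extensibility along morphisms into finite-dimensional tori; by \Cref{item:abendo-prof} that can never yield more than $\widehat{\bZ}$-scaling for profinite $\bA$ (such morphisms factor through finite quotients), which is exactly why your sketch stalls. The intended targets are pro-tori, i.e.\ dually, morphisms from torsion-free discrete groups into $\widehat{\bA}$. The key construction, absent from your proposal, is to realize $\bZ/p^{\infty}\oplus\bZ/q^{\ell}$ (non-reduced case) and infinite products $\prod\bZ/p^{\ell_p}$ over distinct primes (infinitely-many-Sylows case) as quotients of explicit torsion-free direct limits $\varinjlim(\bZ\hookrightarrow\bZ\hookrightarrow\cdots)$ whose endomorphism rings are $\bZ$ localized at $p$, respectively $\bZ$ itself; combined with the reduction of Proposition~\ref{pr:red2dirsums} to countable direct sums of indecomposables --- also missing from your outline --- this is what forces the scaling to be an honest integer.
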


It will be convenient to spell out some auxiliary partial results; first a simple observation:

\begin{lemma}\label{le:presallquot}
  Consider the following conditions on an endomorphism $\varphi\in\End(\bA)$ of a compact abelian group.
  \begin{enumerate}[(a)]
  \item\label{item:extor} $\varphi$ extends along any morphism into a finite-dimensional torus.

  \item\label{item:excirc} $\varphi$ extends along any morphism into $\bS^1$. 

  \item\label{item:allin} $\varphi$ leaves invariant every closed subgroup of $\bA$, and hence induces an endomorphism on every quotient of $\bA$.

  \item\label{item:fininv} $\varphi$ leaves invariant every finite-codimensional closed subgroup of $\bA$, and hence induces an endomorphism on every Lie quotient of $\bA$.
  \end{enumerate}
  We have
  \begin{equation*}
    \text{
      \Cref{item:extor}
      $\xRightarrow{\quad}$
      \Cref{item:excirc}
      $\xLeftrightarrow{\quad}$
      \Cref{item:allin}
      $\xLeftrightarrow{\quad}$
      \Cref{item:fininv}
    }.
  \end{equation*}
\end{lemma}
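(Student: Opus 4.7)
The plan is to pass to Pontryagin duals: write $\widehat\bA$ for the discrete abelian dual of $\bA$ and $\widehat\varphi \in \End(\widehat\bA)$ for the induced endomorphism $\chi \mapsto \chi \circ \varphi$. The implication \Cref{item:extor} $\Rightarrow$ \Cref{item:excirc} is immediate, since $\bS^1$ is a one-dimensional torus, so the substance lies in the equivalence of the three remaining conditions.

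I would translate each condition via duality. The annihilator bijection sends closed subgroups $H \le \bA$ to subgroups $H^\perp \le \widehat\bA$, under which $\varphi(H) \subseteq H$ iff $\widehat\varphi(H^\perp) \subseteq H^\perp$; moreover, finite-codimensional $H$ correspond to finitely generated $H^\perp$ (since compact abelian Lie groups are precisely those with finitely generated discrete dual). Additionally, $\End(\bS^1) \cong \bZ$ acts on the circle by $z \mapsto z^n$, so extension of $\varphi$ along a character $\chi \in \widehat\bA$ amounts to $\chi \circ \varphi = \chi^n$ for some $n \in \bZ$, i.e.\ $\widehat\varphi(\chi) \in \bZ \chi$. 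In dualized form, \Cref{item:excirc} becomes ``$\widehat\varphi$ preserves every cyclic subgroup of $\widehat\bA$'', \Cref{item:allin} becomes ``$\widehat\varphi$ preserves every subgroup'', and \Cref{item:fininv} becomes ``$\widehat\varphi$ preserves every finitely generated subgroup''.

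Of the three remaining arrows, \Cref{item:allin} $\Rightarrow$ \Cref{item:fininv} $\Rightarrow$ \Cref{item:excirc} are trivial once dualized, since cyclic subgroups are finitely generated and finitely generated subgroups are subgroups. The closing step \Cref{item:excirc} $\Rightarrow$ \Cref{item:allin} proceeds by reducing arbitrary subgroups to cyclic ones: for any subgroup $K \le \widehat\bA$ and any $g \in K$, the cyclic subgroup $\bZ g$ lies in $K$ and is $\widehat\varphi$-invariant by the cyclic-case hypothesis, so $\widehat\varphi(g) \in \bZ g \subseteq K$, whence $\widehat\varphi(K) \subseteq K$.

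There is no serious obstacle here; once the Pontryagin dictionary is set up, the substance is just the observation that $\widehat\varphi$-invariance of every subgroup of a discrete abelian group reduces to $\widehat\varphi$-invariance of its cyclic subgroups. The only mild care needed lies in checking that extending $\varphi$ along a character really does correspond to $\widehat\varphi(\chi) \in \bZ \chi$, which is immediate from the description of $\End(\bS^1)$.
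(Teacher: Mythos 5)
Your proposal is correct and follows essentially the same route as the paper: Pontryagin-dualize, observe that extensibility along characters amounts to $\widehat\varphi(\chi)\in\bZ\chi$ since $\End(\bZ)=\bZ$, and note that preserving all cyclic subgroups of the discrete dual is the same as preserving all (or all finitely generated) subgroups. The translation of the invariance conditions through the annihilator correspondence is handled correctly, so there is nothing to add.
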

\begin{proof}
  That \Cref{item:extor} implies \Cref{item:excirc} is obvious. As for the rest, it will become transparent upon (Pontryagin-)dualizing. Extensibility of $\varphi$ along $\bA\xrightarrow{\theta}\bB$, for instance, can also be phrased as extensibility of the Pontryagin dual endomorphism $\widehat{\varphi}\in\End(\widehat{\bA})$ along the morphism 
  \begin{equation*}
    \widehat{\bB}\xrightarrow{\quad\widehat{\theta}\quad}\widehat{\bA}
  \end{equation*}
  of {\it discrete} abelian groups in the obvious sense, dual to that of \Cref{def:ext}. With that in place, \Cref{item:excirc} says that $\widehat{\varphi}$ extends along every morphism $\bZ\to\widehat{\bA}$. Since $\End(\bZ)=\bZ$, this means exactly that every element of $\widehat{\bA}$ is mapped into an integer multiple of itself, or that all (cyclic, or finitely-generated, or unrestricted) subgroups of $\widehat{\bA}$ are $\widehat{\varphi}$-invariant. 
\end{proof}

\begin{lemma}\label{le:torendo}
  \Cref{th:abendo} holds for pro-tori (i.e. when $\bA=\bA_0$). 
\end{lemma}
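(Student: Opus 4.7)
The plan is to reduce both parts (1) and (2) of \Cref{th:abendo} (the only substantive parts for pro-tori, since the profinite hypothesis in (3) and (4) is incompatible with non-trivial connectedness) to a standard elementary fact about torsion-free abelian groups, via Pontryagin duality. First I would note that (1) reduces to (2): integer-scaling endomorphisms $n\cdot\id_\bA$ extend along any morphism $\theta:\bA\to\bH$ into a compact abelian group simply via $n\cdot\id_\bH$ (since scaling commutes with any homomorphism), while conversely extensibility along morphisms into arbitrary compact abelian groups is \emph{a fortiori} extensibility along morphisms into tori.

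The ``if'' direction of (2) is the same observation. For the ``only if'' direction I would invoke \Cref{le:presallquot}: extensibility of $\varphi$ along morphisms into finite-dimensional tori implies extensibility along morphisms into $\bS^1$, equivalently that every closed subgroup of $\bA$ is $\varphi$-invariant. Pontryagin-dualizing, set $M:=\widehat{\bA}$ (a torsion-free discrete abelian group, as $\bA$ is a pro-torus) and $\psi:=\widehat{\varphi}$; the invariance condition then becomes that every cyclic subgroup of $M$ is $\psi$-invariant. Equivalently, each $a\in M$ admits an integer $n_a$ (unique when $a\ne 0$, by torsion-freeness) with $\psi(a)=n_a\,a$.

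The remaining task is to show $n_a$ is independent of nonzero $a$. For $\bZ$-linearly independent $a,b\in M$, expanding $\psi(a+b)=n_{a+b}(a+b)$ against $\psi(a+b)=n_a a+n_b b$ and using independence forces $n_a=n_{a+b}=n_b$. For linearly dependent nonzero $a,b$, write $ma=nb$ with nonzero integers $m,n$, apply $\psi$, and use torsion-freeness of $M$ to conclude $n_a=n_b$. The resulting common value $n$ then satisfies $\psi=n\cdot\id_M$, which dualizes back to $\varphi=n\cdot\id_\bA$, as required.

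There is no genuine obstacle here beyond correctly matching the dual formulation supplied by \Cref{le:presallquot} with the elementary claim about $M$; the ``scalar'' argument in the previous paragraph is a classical manipulation that goes through uniformly regardless of the rank of $M$ (in the rank-one case linear independence is unavailable, but all nonzero elements are then pairwise dependent, so the second sub-case alone suffices).
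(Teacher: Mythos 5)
Your proof is correct and follows essentially the same route as the paper: Pontryagin-dualize and use the elementary fact that an endomorphism of a torsion-free abelian group preserving every cyclic subgroup is an integer scaling. The only (harmless) difference is that the paper first presents the pro-torus as a cofiltered limit of finite-dimensional tori and runs the scalar argument on each $\bZ^d$ before gluing, whereas you run it directly on the full dual $\widehat{\bA}$ --- which works because the linear-(in)dependence argument does not require finite rank.
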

\begin{proof}
  One direction is obvious (scaling maps, of course, extend along morphisms of abelian groups), so we will be focusing on the converse claims. 
  
  The claim is immediate for a {\it torus} $\bA=\bT^d=(\bS^1)^d$: those endomorphisms of its Pontryagin dual \cite[Examples 3.5.1]{de} $\widehat{\bT^d}\cong \bZ^d$ are elements of
  \begin{equation*}
    \End(\bZ^d)\cong M_d(\bZ):=\text{$d\times d$ matrices with integer entries}
  \end{equation*}
  preserving {\it every} subgroup of $\bZ^d$. That such a morphism is scalar (i.e. multiplication by some $n$) then follows immediately. Now write
  \begin{equation*}
    \bA\cong \varprojlim_{i\in I}\bT_i\quad\left(\text{cofiltered limit of tori by \cite[Corollary 2.36]{hm4}}\right),
  \end{equation*}
  apply the preceding observation to each individual torus $\bT_i$, and note that $n\in \bZ$ is uniquely determined by its scaling action on any torus. This means that there must be {\it one} $n$ by which $\varphi$ operates on each of the quotients $\bA\xrightarrowdbl{}\bT_i$.
\end{proof}

\begin{remark}
  Which morphisms one seeks to extend along (e.g. embedding or arbitrary, or morphisms into specific classes of groups) can make a big difference to the classification, as should already be clear from comparing \Cref{th:ab-conn} and \Cref{le:torendo}, say. Other examples of such drastic differences (to the preceding results):
  \begin{enumerate}[(a), leftmargin=*, wide=0pt]
  \item Tori $\bT^I:=(\bS^1)^I$ (for possibly infinite index sets $I$) being {\it injective objects} \cite[Theorem 8.78]{hm4} in the category of compact abelian groups, their embeddings therein split. It follows immediately that {\it all} endomorphisms of a torus extend along its embeddings into compact abelian groups. 

  \item By the same token (injectivity of tori) every endomorphism of every compact abelian group extends along every embedding into a torus.     
  \end{enumerate}
\end{remark}

Because \Cref{th:abendo}\Cref{item:abendo-protor} centers around distinguishing between scaling by profinite integers and honest integers, it will be helpful to isolate the relevant instances of that problem, focusing on particularly pleasant classes of groups.

\begin{proposition}\label{pr:red2dirsums}
  Let $\bA$ be a profinite abelian group and $n\in\widehat{\bZ}$, inducing a scaling endomorphism $\varphi=\varphi_n$ on $\bA$.

  $\varphi$ is in fact induced by an integer if and only if this is so for the corestrictions of $\varphi$ to countable quotients
  \begin{equation*}
    \bA\xrightarrowdbl{\quad}\prod_{i\in\aleph_0}\bA_i,
  \end{equation*}
  where $\bA_i$ are either finite cyclic or copies of the $p$-adic groups $(\bZ_p,+)$ \cite[Example 2.1.6(2)]{rz_prof} for various primes $p$. 
\end{proposition}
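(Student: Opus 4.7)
The plan is to dualize and combine the primary decomposition of profinite abelian groups with the Kulikov theory of basic subgroups for abelian $p$-groups. The ``only if'' direction is immediate, since scaling by an integer descends to every quotient, so attention focuses on the converse. Set $I := \mathrm{Ann}_{\widehat{\bZ}}(\bA)$; since $I$ is a closed ideal in $\widehat{\bZ}\cong\prod_p\bZ_p$ it splits as $I = \prod_p p^{k_p}\bZ_p$ with $k_p\in\{0,1,\ldots,\infty\}$, and $\varphi_n$ coincides with $\varphi_m$ for some $m\in\bZ$ precisely when $n-m\in I$. It will therefore suffice to exhibit a \emph{single} quotient $\bA\twoheadrightarrow\bA'$ of the form demanded in the statement and with $\mathrm{Ann}_{\widehat{\bZ}}(\bA')=I$: the hypothesis then supplies $m$ with $\varphi_n|_{\bA'}=\varphi_m$, and the equality of annihilators propagates this identity back up to $\bA$.

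To build such an $\bA'$, use the Sylow decomposition $\bA=\prod_p\bA_p$ and treat each prime individually. If $\bA_p$ has finite exponent $p^{k_p}$, the Pontryagin dual $\widehat{\bA_p}$ contains an element of that order and hence a subgroup $\bZ/p^{k_p}$, which dualizes to a surjection $\bA_p\twoheadrightarrow\bZ/p^{k_p}$ with annihilator $p^{k_p}\bZ_p$. If $\bA_p$ has unbounded exponent and $\widehat{\bA_p}$ contains a Pr\"ufer subgroup $\bZ[1/p]/\bZ$, dualizing yields a surjection $\bA_p\twoheadrightarrow\bZ_p$ of trivial annihilator. Otherwise $\widehat{\bA_p}$ is reduced of unbounded exponent: invoke Kulikov's basic-subgroup theorem to obtain a pure direct sum of cyclics $B\le\widehat{\bA_p}$ with $\widehat{\bA_p}/B$ divisible, check (see the final paragraph) that such a $B$ is forced to be of unbounded exponent, select a countable sub-sum $\bigoplus_j\bZ/p^{k_{p,j}}$ with $k_{p,j}\to\infty$, and dualize to obtain $\bA_p\twoheadrightarrow\prod_j\bZ/p^{k_{p,j}}$, again with trivial annihilator. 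Assemble $\bA':=\prod_p\bA'_p$: it is a countable product of the allowed basic factors (countably many primes, each contributing countably many factors), a quotient of $\bA$, and its annihilator $\prod_p\mathrm{Ann}_{\bZ_p}(\bA'_p)$ equals $I$ by construction.

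The only non-trivial structural input is the assertion that a reduced abelian $p$-group $G$ of unbounded exponent has an unbounded basic subgroup, and this is where the main work sits. Suppose instead that a basic subgroup $B\le G$ had exponent $p^k$. Purity of $B$ gives $B\cap p^kG=p^kB=0$, while the divisibility of $G/B$ gives $B+p^kG=G$; hence $G=B\oplus p^kG$, and so $p^kG\cong G/B$ is divisible. Reducedness of $G$ then forces $p^kG=0$, contradicting the unboundedness of the exponent of $G$. Granting this, the remainder of the argument is routine Pontryagin-dual bookkeeping.
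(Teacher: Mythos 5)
Your argument is correct. It draws on the same toolbox as the paper's proof (Pontryagin duality together with the structure theory of torsion abelian $p$-groups), but it is organized quite differently, and in a way that makes the final step more transparent. The paper also dualizes and splits $\widehat{\bA}$ into its divisible and reduced parts, but it then reduces to \emph{countable} duals, runs a (transfinite) recursion modelled on Kaplansky's existence proof for direct sums of cyclics to exhaust each reduced primary component, and concludes by asserting that the resulting direct sum of indecomposables ``suffices to distinguish proper profinite integers from plain integers.'' You make that last step completely precise by introducing the annihilator ideal $I=\mathrm{Ann}_{\widehat{\bZ}}(\bA)=\prod_p p^{k_p}\bZ_p$ and noting that $\varphi_n$ is integral exactly when $n$ lies in $\bZ+I$; this converts the whole problem into exhibiting a \emph{single} quotient of the prescribed countable shape with the same annihilator as $\bA$, which your Sylow-by-Sylow trichotomy (bounded exponent; a Pr\"ufer subgroup in the dual; reduced unbounded dual, handled via Kulikov's basic subgroups together with the correctly proved fact that a basic subgroup of a reduced unbounded $p$-group is unbounded) then supplies. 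The payoff of your route is a sharper statement -- one explicit distinguishing quotient, with the integer $m$ pinned down by an exact ideal-theoretic criterion rather than an exhaustion argument -- at the modest cost of invoking Kulikov's theorem as a black box where the paper partially re-runs its proof.
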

\begin{proof}
  There is no need to address one of the implications, so assume $\varphi$ is {\it not} an integer scaling; or, for convenience, we say that $\varphi$ scales by a {\it proper} profinite integer.

  Switching perspective to the Pontryagin dual $\widehat{\bA}$, the claim is one regarding subgroups thereof expressible as countable direct {\it sums} (rather than products): of finite cyclic groups and {\it divisible} \cite[\S 5]{kap_infab} indecomposable summands of the form
  \begin{equation*}
    \bZ/p^{\infty}:=\bigcup_{n\in \bZ_{>0}}\bZ/p^n\cong \left\{\text{$p$-power-order roots of unity}\right\}\subset \bS^1
  \end{equation*}
  (the $Z(p^{\infty})$ of \cite[\S 2 (g)]{kap_infab}), respectively dual to $\bZ_p$. We have \cite[Theorems 3 and 4]{kap_infab}
  \begin{equation}\label{eq:divdec}
      \widehat{\bA}\cong \bD\oplus \bE,\quad \bD=\bigoplus\bZ/p^{\infty},\quad \bE\text{ {\it reduced}},
  \end{equation}
  meaning (\cite[Definition preceding Theorem 4]{kap_infab}, \cite[following Lemma 4.1.3]{fuchs_abgp}) that $\bE$ has no divisible summands. The hypothesis is that  
  \begin{equation*}
    \sup_{\text{finite }\bF\le \widehat{\bA}}
    \min\left\{|m|\quad : \quad m\in \bZ,\ \widehat{\varphi}|_{\bF}=\text{scaling by $m$}\right\}=\infty
  \end{equation*}
  That supremum will also be infinite when $\bF$ ranges over a {\it countable} set of finite subgroups, so there is no loss in assuming $\widehat{\bA}$ countable to begin with (as we henceforth will). $\bE$ and the set of summands in \Cref{eq:divdec}, then, will be countable.
  
  Because primary cyclic groups and the $\bZ/p^{\infty}$ are \cite[Theorem 10]{kap_infab} the only indecomposable torsion abelian groups, the claim is that if
  \begin{equation*}
    \widehat{\varphi}
    \in
    \End(\text{countable abelian group }\widehat{\bA})
  \end{equation*}
  scales by a proper profinite integer, then it already does so on a direct sum of indecomposable subgroups of $\widehat{\bA}$. The summands $\bD$ and $\bE$ also decompose \cite[Theorem 1]{kap_infab} as the direct sums of their respective {\it primary components} $\bD_p$ and $\bE_p$, i.e. their maximal primary subgroups:
  \begin{equation*}
    \begin{aligned}
      \bD&=\bigoplus_{\text{primes p}}\bD_p
           ,\quad \bD_p=(\bZ/p^{\infty})^{\oplus \alpha_p},\quad 0\le \alpha_p\le \aleph_0\\
      \bE&=\bigoplus_{\text{primes p}}\bE_p
           ,\quad \bE_p\text{ reduced, countable, $p$-primary}. 
    \end{aligned}    
  \end{equation*}
  Recall \cite[\S 9]{kap_infab} the notion of {\it height}
  \begin{equation*}
    h(x) = h_{\bE_p}(x):=\sup\left\{d\in \bZ_{>0}\ |\ \exists y\in \bE_p\text{ with }p^d y=x\right\}
  \end{equation*}
  for an element $x$ of a primary group (such as $\bE_p$, in this case). We will now construct a subgroup
  \begin{equation}\label{eq:dirsumine}
    \bE_p':=\bigoplus_{i}\bZ/p^i\le \bE_p
  \end{equation}
  branching over two cases:
  \begin{enumerate}[(I)]
  \item\label{item:bddh} the heights of $x\in \bE_p$ with $px=0$ are bounded;
  \item\label{item:unbddh} or not.
  \end{enumerate}
  Either way, we construct \Cref{eq:dirsumine} recursively, choosing one summand at a time as in the proof of \cite[Theorem 9]{kap_infab}. In case \Cref{item:unbddh} we can choose countably many summands of unbounded orders and stop. In case \Cref{item:bddh}, we continue the recursion transfinitely, taking unions for limit ordinals. Every step will produce {\it pure} \cite[\S 7]{kap_infab} subgroups of $\bE_p$, which, being by assumption of bounded order, are again summands of $\bE_p$ \cite[Theorem 7]{kap_infab}. But this means that the recursion can proceed until we have exhausted $\bE_p$, so that {\it it} will be $\bE'_p$.   
  
  The subgroup
  \begin{equation*}
    \bigoplus_{\text{primes }p}\left(\bD_p\oplus \bE'_p\right)\le \widehat{\bA},
  \end{equation*}
  by construction a direct sum of indecomposable groups, is now sufficient to distinguish between proper profinite integers and plain integers. 
\end{proof}

\pf{th:abendo}
\begin{th:abendo}
  Once more, the extensibility of {\it integer}-scaling maps is not at issue. Nor is the corresponding direction in \Cref{item:abendo-prof} significantly more difficult: morphisms from a profinite group into a Lie group factor through a finite quotient, on which scaling by $n\in\widehat{\bZ}$ induces scaling by an integer; that then extends, etc. The rest of the proof is devoted, then, to the interesting implication(s). 
  
  Assume for the moment that $\varphi$ extends along morphisms into finite-dimensional tori, a hypothesis common to all items. \Cref{le:presallquot} then implies that $\varphi$ induces an endomorphism $\varphi_{\pi}$ on every quotient
  \begin{equation*}
    \bA\xrightarrowdbl{\quad\pi\quad}\bB:=\bT^d\times\bF,\quad \bF\text{ finite},
  \end{equation*}
  while \Cref{le:torendo} further implies that $\varphi_{\pi}$ scales $\bT^d$ by some $n\in \bZ$. At this point, the proofs of the various items diverge (with \Cref{item:abendo-all} most conveniently saved for last).

  \begin{enumerate}[label={}, leftmargin=*, wide=0pt]
  \item {\bf \Cref{item:abendo-tor}:} I claim first that if $d>0$, then $\varphi_{\pi}$ is scaling by $n$ (globally, so on $\bF$ as well as $\bT^d$).

    Indeed, we have just concluded that all closed subgroups of $\bB=\bT^d\times \bF$ are $\varphi_{\pi}$-invariant, which already implies that $\varphi_{\pi}$ restricted to $\bF\subset \bB$ scaling by some $n'\in \bZ$. If
    \begin{equation*}
      y\in \bF,\quad x\in \bT^d,\quad\mathrm{ord}(y)\ |\ \mathrm{ord}(x) 
    \end{equation*}
    then the finite subgroup of $\bB$ generated by $(x,y)$ has no non-trivial intersection with the right-hand factor $\bF$. But then
    \begin{equation*}
      n(x,y)=(nx,ny)\quad\text{and}\quad(nx,n'y) = \varphi_{\pi}(x,y)
    \end{equation*}
    cannot both belong to the group generated by $(x,y)$ unless $n'y=ny$.

  \item {\bf \Cref{item:abendo-prof}:} We have already established that $\varphi$ induces an endomorphism $\varphi_\bF$ on every finite quotient $\bA\xrightarrowdbl{\pi_{\bF}}\bF$, and part \Cref{item:abendo-tor} then ensures that that endomorphism is multiplication by some $n_{\bF}\in \bZ$. Now, $(n_{\bF})_{\bF}\subset \bZ\subset \widehat{\bZ}$ constitutes a {\it net} \cite[Definition 11.1]{will_top} if the finite quotients are ordered by
    \begin{equation*}
      \pi_{\bF}\le \pi_{\bF'}
      \iff
      \text{the latter factors through the former},
    \end{equation*}
    which, $\widehat{\bZ}$ being compact, has a cluster point \cite[Theorem 17.4]{will_top} $n\in\widehat{\bZ}$; that $n$ will do.     
  \item {\bf \Cref{item:abendo-protor}:} Since part \Cref{item:abendo-tor} supplies the conclusion even under the weaker hypothesis of extensibility along morphisms into {\it tori} unless $\bA$ is profinite, only that case is left. Furthermore, we may then assume by \Cref{item:abendo-prof} that the map scales by some $n\in\widehat{\bZ}$. The hypothesis can also be phrased as the extensibility of $\widehat{\varphi}\in\End(\widehat{\bA})$ along every morphism $\bD\to \widehat{\bA}$ from a discrete torsion-free group $\bD$ (proof of \Cref{le:presallquot}). In that context, \Cref{pr:red2dirsums} reduces the problem to the case when $\widehat{\bA}$ is a countable sum of indecomposable torsion abelian groups.

    Dualizing, the two properties listed in \Cref{item:abendo-protor} translate \cite[Corollary 8.5]{hm4} to the following. 

    \begin{enumerate}[(I), leftmargin=*, wide=0pt]
    \item {\bf : $\widehat{\bA}$ is not reduced.} That is, it has at least one summand $\bZ/p^{\infty}$. Note that the action of $\bZ_p$ (and {\it a fortiori} also that of $\bZ\subset \bZ_p$) is faithful on $\bZ/p^{\infty}$, so an integer scaling a subgroup of the form
      \begin{equation}\label{eq:pinfq}
        \bZ/p^{\infty}\oplus \bZ/q^{\ell}\le \widehat{\bA}
        ,\quad q\text{ prime}
      \end{equation}
      is uniquely determined. It follows that it is enough to work with only such groups.

      If $q\ne p$ then \Cref{eq:pinfq} is the quotient of
      \begin{equation*}
        \bD:=
        \varinjlim\left(
          \bZ
          \lhook\joinrel\xrightarrow{\ q^{\ell}p\ }
          \bZ
          \lhook\joinrel\xrightarrow{\ p\ }
          \bZ
          \lhook\joinrel\xrightarrow{\ p\ }
          \cdots
        \right),
      \end{equation*}
      by the leftmost copy of $\bZ$. The endomorphism group of $\bD$ is the {\it localization} \cite[Chapter 3]{am_comm} of $\bZ$ with respect to the multiplicative set $\{p^n\ |\ n\in \bZ_{\ge 0}\}$; or: the rationals with $p$-power denominators. Since the only such rationals that descend to an endomorphisms of $\bZ_p$ are integers, we are done.

      If, on the other hand, $q=p$, then the argument can proceed as in the proof of part \Cref{item:abendo-tor} above, with the two factors $\bZ/p^{\infty}$ and $\bZ/p^{\ell}$ in place of $\bT^d$ and $\bF$ respectively (for we can then find $y\in \bZ/p^{\ell}$ and $x\in \bZ/p^{\infty}$ with the former's order dividing the latter's, etc.).
      
      We henceforth assume $\widehat{\bA}$ reduced. 
      
    \item {\bf : $\widehat{\bA}$ has infinitely many primary components.} The proof is very similar in spirit to the preceding argument. An 
      \begin{equation}\label{eq:infprod}
        \text{infinite product of the form }\prod_{\text{infinitely many distinct }p}\bZ/p^{\ell_p}\le\widehat{\bA}
      \end{equation}
      is a quotient (by the leftmost copy of $\bZ$) of 
      \begin{equation*}
        \varinjlim\left(
          \bZ
          \lhook\joinrel\xrightarrow{\ p^{\ell_p}\ }
          \bZ
          \lhook\joinrel\xrightarrow{\ (p')^{\ell'}\ }
          \bZ
          \lhook\joinrel\xrightarrow{\ (p'')^{\ell''}\ }
          \cdots
        \right),
      \end{equation*}
      whose endomorphism ring is $\bZ$. It follows that $\widehat{\varphi}$ scales every \Cref{eq:infprod} by an integer; since $\bZ$ acts faithfully thereon, said integers all coincide with, say, some $n\in \bZ$. But then $\widehat{\varphi}$ scales {\it every} sum of indecomposable abelian groups by $n$, including those with infinitely many $p$-primary summands for single primes $p$. The conclusion follows from \Cref{pr:red2dirsums}.
    \end{enumerate}

  \item {\bf \Cref{item:abendo-all}:} The preceding cases (and \Cref{pr:red2dirsums}) have reduced the issue to endomorphisms of subgroups
    \begin{equation*}
      \bD\cong\bigoplus_{\text{prime powers }q}\bZ/q\le \widehat{\bA},
    \end{equation*}
    extensible along morphisms $\bE\to \bD$ in the sense dual to \Cref{def:ext} (as in the proof of \Cref{le:presallquot}). To settle that case, consider the extension
    \begin{equation}\label{eq:zed}
      0\to \bZ\xrightarrow{\quad}\bE\xrightarrow{\quad}\bD\to 0
    \end{equation}
    corresponding \cite[\S 9.1]{fuchs_abgp} the an element of
    \begin{equation*}
      \Ext(\bD,\bZ)\cong \Hom(\bD,\bQ/\bZ)
      \quad\text{\cite[Corollary 9.3.6]{fuchs_abgp}}
    \end{equation*}
    that identifies every summand $\bZ/q$ of $\bD$ isomorphically (in any fashion whatever) with the unique order-$q$ cyclic subgroup of $\bQ/\bZ$. Because the endomorphism ring of the $\bZ$ kernel in \Cref{eq:zed} is $\bZ$, any endomorphism of $\bE$ extending a $\widehat{\bZ}$-scaling of $\bD$ must in fact be a $\bZ$-scaling.  \qedhere
  \end{enumerate}
\end{th:abendo}

\begin{remark}
  The group $\bE$ of \Cref{eq:zed} will generally {\it not} be torsion-free, so the above argument does indeed use the full force of the hypothesis of \Cref{th:abendo}\Cref{item:abendo-all}: the compact abelian group $\widehat{\bE}$ will not, typically, be connected. 
\end{remark}



\addcontentsline{toc}{section}{References}

\Addresses

\end{document}